\pgfplotsset{compat=1.13}
\def\L{{\mathcal{L}}}
\def\A{{\widehat{A}}}
\def\X{{\widehat{X}}}
\newcommand{\mat}[4]{\left(\begin{array}{cc}#1 & #2 \\ #3 & #4 \\
	\end{array}\right)}
\newtheorem{theorem}{Theorem}[section]
\newtheorem{corollary}{Corollary}[theorem]
\newtheorem{lemma}[theorem]{Lemma}
\newtheorem{Example}{Example}[theorem]
\newtheorem{proposition}[theorem]{Proposition}
\theoremstyle{definition}
\newtheorem{definition}{Definition}[section]
\theoremstyle{remark}
\begin{document}

\title{Solutions to a system of Yang-Baxter matrix equations}

\author{ Askar Ali M, Himadri Mukherjee}
\address{BITS Pilani K. K. Birla Goa Campus, Goa, India}
\author{ Bogdan D. Djordjevic }
\address{Mathematical Institute of the Serbian Academy of Sciences and Arts, Kneza Mihaila 36, 11000 Belgrade, Serbia}
\thanks{ E-mail addresses:  p20190037@goa.bits-pilani.ac.in, himadrim@goa.bits-pilani.ac.in, \\  bogdan.djordjevic@turing.mi.sanu.ac.rs}
\keywords{Yang-Baxter equation, Matrix equation, System of equations, Nonlinear matrix equations}
\thanks{Mathematics Subject Classification: AMS Classification 2020. Primary: 15A24, 16T25}

\maketitle
\noindent
\begin{abstract}
 In this article, a system of Yang-Baxter-type matrix equations is studied, $XAX=BXB$, $XBX=AXA$, which ``generalizes" the matrix Yang-Baxter equation and exhibits a broken symmetry. We investigate the solutions of this system from various geometric and topological points of view. We analyze the existence of doubly stochastic solutions and intertwining solutions to the system and describe the conditions for their existence.  Furthermore, we characterize the case when $A$ and $B$ are idempotent orthogonal complements. i.e., $A^2 =A, B^2= B, AB = BA =0$. We also completely characterize the set of solutions for $n=2$ using commutative algebraic techniques.
\end{abstract}



\section{Introduction} 
 
 For a given pair of square matrices, $A,B \in M_n(\mathbb{K})$, where $\mathbb{K} =\ \mathbb{R}$ or $\mathbb{C}$,
 \begin{equation}\label{main_equation}
 XAX = BXB \text{ and } XBX = AXA
 \end{equation}
 will be called as a system of Yang-Baxter matrix equations. Yang-Baxter equations are extremely useful equations that have been found to have multiple uses in physics, computer science, and many areas of pure mathematics. This equation arises in the context of physics as, in knot theory, if we take the following braids, which are two among the generators of a braid group, and call the first one $A$ and the second one $B$ as in Figure 1, we see that $ABA=BAB$ as braids in Figure 2.

\begin{figure}[htb]
\begin{center}
$A=$ \hspace{.2cm} \begin{tikzpicture}
\braid[number of strands=3] (braid) a_1 ;
\end{tikzpicture}
\hspace{2cm}
$B=$ \hspace{.2cm}\begin{tikzpicture}
\braid[number of strands=3] (braid) a_2 ;
\end{tikzpicture}
\caption{}
\end{center}

\end{figure}
\begin{figure}[htb]
\begin{center}
$BAB=$ \hspace{.2cm}\begin{tikzpicture}
\braid[number of strands=3] (braid) a_2 a_1 a_2 ;
\end{tikzpicture}
\hspace{2cm}
$ABA=$ \hspace{.2cm}\begin{tikzpicture}
\braid[number of strands=3] (braid) a_1 a_2 a_1 ;
\end{tikzpicture}
\caption{}
 \end{center}
\end{figure}
This is not a coincidence that the braids satisfy the equation $AXA=XAX$; it was observed by Artin \cite{Artin} (especially in \cite{Artin2}) in his development of the famous and pioneering theory of braid groups. As a result of the above relations among the generators of a braid group, it makes sense to study the matrix equation that we call YBE.

 While working on the paper \cite{first}, dealing with the Yang-Baxter equation, namely $AXA=XAX$, the authors observed the usefulness of the results that were proved for the coefficient matrix ( i.e., $A$) for the variable $X$ owing to the symmetry of the equation. It was observed that, from the pure matrix equation perspective, it is much better if we deal with the equation $XYX=YXY$, where both $X,Y$ are treated as variables. This equation has the advantage of being homogeneous; thus, many results can be simplified. When specialized, the homogeneous equation, in turn, gives the results for the original YBE. Motivated by this, we will call the equation 
 \[XYX=YXY\]
 as the homogeneous matrix Yang-Baxter equation. Let $V$ be the set of solutions of it over the matrix algebra $M_n(\mathbb{K})$  ( for a suitable field $\mathbb{K}$). We observe the following for the space $V$. Owing to the symmetry of the equations, this has a $\mathbb{Z}_2$ action that flips the coordinates. Continuing our efforts towards simplification, instead of working with the whole $V$, it is much better to work with the $\mathbb{Z}_2$ quotient of it; let us denote the quotient by $\mathfrak{X}$. It will be very interesting to understand the geometry of this object using various tools such as commutative algebra and algebraic geometry. We, taking advantage of the homogeneity of the equation, can look at the projective zero set associated with it, namely, $\{(X,Y) \in \mathbb{P}^{2n^2-1} \, \mid \, XYX=YXY\}$. Let us call this variety as $\mathfrak{Y}$. It is easily observed that this is not an irreducible subset of the projective space, owing to the trivial components $\{(X,0) | X \neq 0 \}$ and $\{(0, )| X \neq 0 \}$, where $0$ represents the zero matrix of an appropriate dimension. So instead, we can look at the projective algebraic set $\{(X,Y) \in \mathbb{P}^{n^2-1} \times \mathbb{P}^{n^2-1} \, \mid \, XYX=YXY\}$. This gets rid of the problems arising out of the trivial solutions of the YBE, and further, if we take a quotient by the action of $\mathbb{Z}_2$, denoted as $\mathfrak{Z}$, we have a much larger chance of asking the following, \[\textit{when is this an irreducible projective variety?}\] 
 This question is interesting, and the authors believe it has a good chance of getting interesting answers. The reason for believing so is that in the paper \cite{manifold}, L. Lu has given the connected components of the space of solutions in some special cases, which turn out to have linear structures. 
 
 It was also observed that towards obtaining good results in the YBE, the main hindrance was the trivial solution $X=A$; this was also observed in the other articles as well, say in \cite{manifold}. This trivial solution gives rise to the lack of simplicity of the solutions of the equation. The homogeneous version of the equation does not solve this problem. But, if we allow a different matrix $Z$ and twist the equation as $XYX=ZXZ$, we have removed the trivial solution, which gives rise to cluttering the attempts at solving the equation. Now, we have lost the beautiful symmetry of the equation, so we have to look at its twin pair $XZX=YXY$. This system of the Yang-Baxter type equation will be called a system of YBE. The authors understand that it is not really a system of YBE as the authors in \cite{first} showed that a true system is just equivalent to a single Yang-Baxter equation, so we have decided to give the honour of being a system of YBE to this system of equations. 
 
  This system of equations, when specialized for $A=B$, gives back the original YBE and gives freedom from difficulties regarding trivial solutions. In this paper, we will show that the solutions are not far from the single Yang-Baxter matrix equation even though we are considering the system. Indeed, in Lemma \ref{block-form} below, we show that a solution of the system is a particular solution to a Yang-Baxter equation. In other words, the matrix $X$ solves the system \eqref{main_equation} if and only if the matrix $\X:=\mat{0}{X}{X}{0}$ solves the YBE $\widehat{A} \X\widehat{A} = \X\widehat{A}\X$, where $\widehat{A} = \begin{pmatrix}
0 & A\\
B & 0
\end{pmatrix} $.
  This, in our thoughts, is a very interesting result, as the symmetry of the original YBE is broken, yet it contains a large amount of information regarding the solutions of YBE.

 The first advantage that we obtain from freeing the equation of the symmetry is the possibility of investigating the equation when the pair $A,B$ are not only idempotents but also orthogonal, a condition that has no non-trivial parallel in the single Yang-Baxter equation. Also, in the single YBE scenario, it was observed that the nilpotent coefficient matrix case is the most interesting and important. A generalization of this for the system is the case that $A,B$ are zero-divisors (i.e., $AB=BA=0$). So, it makes ample sense to investigate the system of YBE when the coefficients satisfy
 \[AB=BA=0.\]
 
 Yang Baxter equations are extremely useful fundamental equations in mathematics and physics. This equation has shown up in unexpected forms in many parts of mathematics and mathematical physics. With the advent of quantum computers and quantum computing in general, the Yang Baxter equations have come into new focus as they are used to design quantum gates for a quantum computer. There are numerous articles exploring this area, namely, \cite{YBE-CS2}, \cite{YBE-CS} and \cite{YBE-CS3} in the areas of neural networks. 

  It has been proved that the variety of commuting nilpotent matrices are irreducible \cite{Commuting_nilp-irr, Commuting-nilp-irr2}. A number of geometric papers concerning the YBE have come up recently, namely the papers \cite{path} and \cite{manifold}. Also, a recent paper investigated the case for the coefficient matrix $A$ to be a permutation matrix and found the solutions in the convex hull of the permutation matrices (i.e., the stochastic matrices) \cite{BDD}. Such geometric understanding of the YBE uses many geometric and topological tools, such as the fixed point theory.
 A comprehensive and systematic analysis of the solution set to the YBE $AXA=XAX$ has been done by Din\v ci\'c and Djordjevi\'c in \cite{path} and \cite{RLT}. Huang et al. \cite{Diag} explored all non-commuting solutions of the Yang-Baxter matrix equation for a class of diagonalizable matrices, while Adam et al. \cite{idempotent} considered the equation for involutive matrices. In addition, Saeed Ibrahim Adam et al. \cite{Tridempotent} provided solutions to the Yang-Baxter-like matrix equation when $A^3=A$. Similarly,  Kumar et al., in \cite{numerical}, provided explicit solutions for the singular Yang-Baxter-like matrix equation and their numerical computation. In addition, the Yang-Baxter-like matrix equation has been investigated for diagonalizable coefficient matrices in \cite{two_eigenvalue, Diag, Diag2, diagonaliazable-simple, 3eigenvalue, ortho,idempotent2}. Many articles explored the solution set for the coefficient matrix having rank restrictions, such as in \cite{rank1,rank2.1,rank2.2}.

Furthermore, there has been significant interest in set-theoretic solutions of the Yang-Baxter equation. Castelli et al. \cite{set_theoretic} investigated the indecomposable involutive set-theoretic solutions of the Yang-Baxter equation of prime-power size, while Lebed and Vendramin \cite{structure_set} focused on the structure groups of set-theoretic solutions to the Yang-Baxter equation. The study of these equations was motivated by Drinfeld's work on unsolved problems in quantum group theory \cite{Drinfeld}.
\subsection{Structure of the article} In the ``Preliminaries" section, we recall some basic results and prove a few small lemmas, which are used in the following sections. In the third section, we investigate the spectral relation between the solutions to \eqref{main_equation} and coefficient matrices $A$ and $B$. Cases are discussed when both coefficient matrices are non-singular, when one is singular and the other is non-singular, and when both are singular. A nonzero solution $X$ to \eqref{main_equation} is said to be an intertwining solution if $AX = XB$ and $BX = XA$. In the same section section, we describe intertwining solutions to \eqref{main_equation} and have drawn conditions for the existence of such solutions. In section four, we have investigated the existence of doubly stochastic solutions to the system using topological tools like Brouwer's fixed point theorem. Two square matrices $A$ and $B$ are called idempotent orthogonal complements, when $A^2 = A, B^2 = B,$ and $AB = BA = 0$. Section five gives a complete characterization of solutions to \eqref{main_equation} when $A$  and $B$ are idempotent orthogonal complements. In the final section, we have found solutions to the system when the coefficient matrices are of dimension 2, using Gr$\ddot{o}$bner basis techniques.

\section{Preliminaries}
In this section, we recall some of the established results for the Yang-Baxter type matrix equation $AXA\ =\ XAX$, which will help the reader to draw a general picture of the solution set of the matrix equation and to understand the following sections as well.
\begin{definition}
    Let $Sol(A)$ denote the set of all solutions to the Yang-Baxter matrix equation, $AXA = XAX$, where $A \in M_n(\mathbb{K})$. 
\end{definition}
\begin{lemma}\cite{first} (see also \cite{RLT})
 For any $g \in GL_n(\mathbb{K})$, $g Sol(A) g^{-1} = Sol(gAg^{-1})$.
\end{lemma}
\begin{theorem}\cite{first}
If $A=\lambda I+B$ is a Jordan block, then we have the following
\begin{enumerate}
    \item If $\lambda \neq 0$ and $det(X) \neq 0$, then $X \simeq A$.
    \item If $\lambda \neq 0$ and $det(X)=0$, then $X=0$.
    \item If $\lambda =0$, then the YBE $AXA=XAX$ do not have any invertible solution X.
\end{enumerate}
\end{theorem}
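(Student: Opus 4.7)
The plan is to treat the three cases separately, unified by the observation that for a Jordan block $A=\lambda I+B$ of size $n$ the $A$-invariant subspaces form a complete flag
\[
\{0\}\subset \ker B\subset \ker B^2 \subset \cdots \subset \ker B^n = K^n,
\]
so any two $A$-invariant subspaces are nested; in particular, two complementary $A$-invariant subspaces must be $\{0\}$ and the whole space. Cases (1) and (2) exploit invertibility of $A$ (since $\lambda\neq 0$), while case (3) exploits its nilpotency.

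For case (1), I would multiply $AXA=XAX$ on the left by $X^{-1}$ and on the right by $A^{-1}$ to obtain $X^{-1}AX = AXA^{-1}$. The left-hand side is conjugate to $A$ via $X$, and the right-hand side is conjugate to $X$ via $A$, so $A$ and $X$ are similar, which is exactly $X\simeq A$.

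For case (2), I would first show that both $\ker X$ and $\mathrm{im}\,X$ are $A$-invariant. Invariance of $\ker X$ follows by applying $AXA=XAX$ to $v\in\ker X$ and using invertibility of $A$ to cancel a factor. Invariance of $\mathrm{im}\,X$ follows from rewriting the equation as $AX=X(AXA^{-1})$, which yields $A\cdot\mathrm{im}\,X\subseteq \mathrm{im}\,X$. I would then compare ranks in $AXA=XAX$: the left-hand side has rank $\mathrm{rank}\,X$ since $A$ is invertible, while using invariance of $\mathrm{im}\,X$ one computes $\mathrm{rank}(XAX)=\mathrm{rank}\,X - \dim(\mathrm{im}\,X\cap \ker X)$. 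Equating forces $\mathrm{im}\,X\cap \ker X=\{0\}$, so by dimension $\mathrm{im}\,X\oplus \ker X = K^n$. The flag observation then forces one of these two complementary $A$-invariant subspaces to be zero, and since $\det X=0$ it must be $\mathrm{im}\,X$, giving $X=0$.

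For case (3), with $\lambda=0$ and $A$ a nilpotent Jordan block (of size $n\geq 2$), I would argue by contradiction, assuming $X$ invertible. Applying $AXA=XAX$ to $v\in \ker A$ gives $XAXv = AXAv = 0$, and invertibility of $X$ then yields $AXv=0$; hence $X(\ker A)\subseteq \ker A$. Bootstrapping one level up: for $v\in \ker A^2$ one has $Av\in \ker A$, so $X(Av)\in \ker A$, which forces $AXAv=0$, and the same argument as before gives $Xv\in\ker A$. Thus $X$ maps the $2$-dimensional space $\ker A^2$ into the $1$-dimensional space $\ker A$, contradicting injectivity of $X$. I expect the main obstacle to be the rank computation in case (2)---in particular getting the intersection $\mathrm{im}\,X\cap\ker X$ to drop out cleanly---and in case (3) the non-obvious step is the bootstrap from invariance of $\ker A$ to $X(\ker A^2)\subseteq \ker A$; both conclusions depend essentially on the rigid chain structure of $A$-invariant subspaces of a Jordan block.
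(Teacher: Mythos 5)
The paper does not prove this theorem; it is quoted verbatim from \cite{first} as a preliminary, so there is no internal proof to compare against. Judged on its own, your argument is correct and complete. In case (1), cancelling one $X$ on the left and one $A$ on the right of $AXA=XAX$ gives $X^{-1}AX = AXA^{-1}$, whose two sides are conjugates of $A$ and of $X$ respectively, so $A\sim X$; note this step never uses the Jordan-block hypothesis and proves the stronger statement that any invertible solution for any invertible $A$ is similar to $A$. In case (2), both invariance claims check out ($AXAv=XAXv=0$ plus left-cancellation of $A$ for $\ker X$; $AX = X(AXA^{-1})$ for $\mathrm{im}\,X$), and the rank identity $\mathrm{rank}(XAX)=\mathrm{rank}\,X-\dim(\mathrm{im}\,X\cap\ker X)$ is justified exactly as you indicate: invariance together with invertibility of $A$ gives $A(\mathrm{im}\,X)=\mathrm{im}\,X$, hence $\mathrm{im}(XAX)=X(\mathrm{im}\,X)$, and rank--nullity applied to $X|_{\mathrm{im}\,X}$ yields the formula; equating with $\mathrm{rank}(AXA)=\mathrm{rank}\,X$ forces $\mathrm{im}\,X\cap\ker X=\{0\}$, and the chain structure of invariant subspaces of a nonderogatory matrix (the flag $\ker B\subset\ker B^2\subset\cdots$) finishes it, with $\det X=0$ selecting $\ker X=K^n$. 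This is the one place where the Jordan-block hypothesis is genuinely load-bearing.

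In case (3) the bootstrap is sound: for $v\in\ker A^2$ one has $XAv\in\ker A$ by the first step, i.e.\ $AXAv=0$, whence $XAXv=0$ and, by injectivity of $X$, $Xv\in\ker A$; so $X$ maps the two-dimensional $\ker A^2$ injectively into the one-dimensional $\ker A$, a contradiction. One caveat worth stating explicitly: as written in the paper, statement (3) is false for the $1\times 1$ nilpotent Jordan block $A=(0)$, since then every $X$, invertible or not, satisfies $AXA=XAX$; your parenthetical restriction to size $n\geq 2$ is therefore not a convenience but a necessary hypothesis, and you should say so rather than leave it implicit.
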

\begin{lemma}\cite{first}
When $A= \begin{pmatrix}
A_1&0&\dots &0\\
0&A_2&\dots &0\\
.\\
.\\
.\\
0&0&...&A_m
\end{pmatrix}$ block diagonal matrix, with $A_i$ being square matrix of order $n_i$, $X= \begin{pmatrix}
X_1&0&\dots &0\\
0&X_2&\dots &0\\
.\\
.\\
.\\
0&0&...&X_m
\end{pmatrix}$, is a solution to the YBE with coefficient matrix $A$, where $X_i$ is a solution for  the YBE  $A_iYA_i=YA_iY$.
\end{lemma}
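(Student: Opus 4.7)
The plan is to reduce the matrix equation $AXA=XAX$ block-by-block, using only the standard rule for multiplying block diagonal matrices of compatible block sizes. Since $A$ and $X$ are both block diagonal with the same partition $(n_1,\dots,n_m)$, all the products that appear will again be block diagonal with the same partition, so verifying the equation comes down to checking each diagonal block independently.

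Concretely, I would first observe that for any block diagonal matrices $D=\mathrm{diag}(D_1,\dots,D_m)$ and $E=\mathrm{diag}(E_1,\dots,E_m)$ with matching block sizes, one has $DE=\mathrm{diag}(D_1E_1,\dots,D_mE_m)$. Applying this twice, I would compute
\[
AXA \;=\; \mathrm{diag}\bigl(A_1X_1A_1,\,A_2X_2A_2,\,\dots,\,A_mX_mA_m\bigr),
\]
\[
XAX \;=\; \mathrm{diag}\bigl(X_1A_1X_1,\,X_2A_2X_2,\,\dots,\,X_mA_mX_m\bigr).
\]
Two block diagonal matrices with the same block partition are equal if and only if their corresponding blocks are equal, so $AXA=XAX$ is equivalent to $A_iX_iA_i=X_iA_iX_i$ for every $i=1,\dots,m$, which is precisely the statement that each $X_i$ solves the YBE with coefficient matrix $A_i$. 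This proves the sufficient direction claimed in the lemma (and, read as a biconditional, also the necessary direction, under the standing assumption that $X$ has the prescribed block diagonal shape).

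There is no real obstacle here: the argument is a routine verification via block matrix algebra. The only thing to be careful about is the compatibility of block sizes, which is guaranteed by the hypothesis that the $i$-th block of $X$ has the same order $n_i$ as the $i$-th block of $A$; without this matching, the products $A_iX_i$ and $X_iA_i$ would not make sense. I would state that compatibility explicitly at the start of the proof and then carry out the two one-line computations above.
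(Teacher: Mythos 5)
Your proof is correct: the block-diagonal multiplication rule immediately gives $AXA=\mathrm{diag}(A_iX_iA_i)$ and $XAX=\mathrm{diag}(X_iA_iX_i)$, so the equation holds blockwise exactly when each $X_i$ solves $A_iYA_i=YA_iY$. The paper states this lemma as a cited preliminary from an earlier work and gives no proof of its own, but your routine verification is the standard (and essentially the only) argument for this statement, so there is nothing to flag.
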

\begin{theorem}[Sylvester, \cite{sylvester-original}]\label{sylvester}
     Given matrices $ A\in K^{n\times n}$ and $ B\in K^{m\times m}$, the Sylvester equation $AX+XB=C$ has a unique solution $ X\in  K^{n\times m}$ for any $ C\in K^{n\times m}$, if and only if $A$ and $-B$ do not share any eigenvalue.
\end{theorem}
\begin{theorem}\cite{gen.sylvester}\label{gen.sylvester}
The matrix equation  $AXB-CXD = E$,  where, $A, C \in \mathbb{R}^{m\times m}$, and $D, B\in  \mathbb{R}^{n \times n}$ has a unique solution $X \in \mathbb{R}^{n\times m}$ if and only if,
\begin{itemize}
    \item[i)] $A - \lambda C$ and $D- \lambda B$ are regular matrix pencils, and
\item[ii)] their spectra, $\sigma(A,C)$ and $\sigma (D,B)$ have an empty intersection.
\end{itemize}
\end{theorem}
\begin{definition}\cite{grobner2}
Let $T^n =\ \{x^{a}=x_1^{a_1}x_2^{a_2}\dots x_n^{a_n}/ a_i \in \mathbb{N}\}\ \subset \mathbb{K}[x_1,\dots ,x_n]$. We define the lexicographical order on $T^n$ with $x_1\ >\ x_2\ > \dots >\ x_n$ as follows:
\[ \text{ For } a\ =\ ( a_1 , \dots , a_n ) ,\  b\ =\ ( b_1 , \dots ,b_n ) \in\ \mathbb{N}^n\]
\begin{equation}
  x^a < x^b \Leftrightarrow 
  \begin{cases}
  \mbox{the first coordinates from the left } a_i \mbox{ and }b_i \mbox{ in }a \mbox{ and }b, \\
  \mbox{ which are different, satisfy } a_i<b_i.
  \end{cases}
\end{equation}
So, in the case of two variables $x_1$ and $x_2$, we have
\[1 < x_2 < x_2^2 < x_2^3 < \dots < x_1 < x_2x_1 < x_2^2x_1 < \dots <x_1^2 <\dots \] 
\end{definition}
\begin{definition}\cite{grobner2}
A set of non-zero polynomials $G = \{ g_1 ,g_2,\dots,g_t \}$ contained in an ideal $I \subset \mathbb{K}[x_1,\dots,x_n]$, is called a Gr$\ddot{o}$bner basis for I if for all $f \in I$,  such that $f \neq 0$, there exists $i \in \{ 1 , \dots ,t\}$, such that $lp(g_i)$ divides $lp(f)$, where $lp(f)$ is the leading power product of $f$.
\end{definition}
Using the term order as defined above, one proceeds to perform a ``multivariable division" on the set of polynomial generators of the given ideal $I$.  At the same time, we pick up any ``remainder" that is obtained through this process. A result of this process is a collection of generators that divides any element $f$ of the ideal without leaving any nontrivial ` remainder". This set of generators is called a Gr$\ddot{o}$bner base of the ideal. This ``multivariable" generalization of division algorithm has many applications all throughout mathematics. One of the many applications of Gr$\ddot{o}$bner basis is in the field of elimination theory, in which one ``eliminates" a number of variables from a given set of equations. Once the variables are eliminated, the resulting equations with fewer variables may be solved easily in some situations. 
\begin{theorem}\cite{grobner}
Let $\mathbb{k}$ be a field, and let I be an ideal in $\mathbb{k}[x_1, \cdots, x_n]$.
Let G be a Gr{\"o}bner basis for I with respect to the lexicographic order with $x_1 > x_2 > \dots > x_n$. Then for any $1 \leq t \leq n$, 
$G\  \cap\  \mathbb{k}[x_t, \ldots , x_n]$ is a Gr{\"o}bner basis for the ideal $I\ \cap\ \mathbb{k}[x_t, \ldots , x_n]$.
\end{theorem}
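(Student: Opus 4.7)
The plan is to verify directly, for $G_t := G \cap \mathbb{k}[x_t,\ldots,x_n]$ and the elimination ideal $I_t := I \cap \mathbb{k}[x_t,\ldots,x_n]$, the two defining conditions of a Gr\"obner basis. The containment $G_t \subset I_t$ is immediate from $G \subset I$ together with the construction $G_t \subset \mathbb{k}[x_t,\ldots,x_n]$, so the real work is to show that for every nonzero $f \in I_t$ there exists $g \in G_t$ whose leading power product $lp(g)$ divides $lp(f)$.

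For such an $f$, I would first invoke the Gr\"obner basis hypothesis on $G$: since $f \in I$ and $f \neq 0$, there is some $g \in G$ with $lp(g) \mid lp(f)$. The problem then reduces to promoting this $g$ into $G_t$, i.e.\ to establishing $g \in \mathbb{k}[x_t,\ldots,x_n]$. Here the lexicographic order is exploited decisively. Because $f$ involves only the variables $x_t,\ldots,x_n$, the monomial $lp(f)$ is supported on those variables, and consequently so is its divisor $lp(g)$. Under the lex order with $x_1 > x_2 > \cdots > x_n$, any monomial containing a positive power of some $x_i$ with $i < t$ strictly dominates every monomial supported only on $\{x_t,\ldots,x_n\}$. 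If $g$ possessed a term involving such an $x_i$, that term would outrank $lp(g)$, contradicting maximality of the leading term. Therefore every monomial of $g$ lies in $\mathbb{k}[x_t,\ldots,x_n]$, so $g \in G \cap \mathbb{k}[x_t,\ldots,x_n] = G_t$, and the required divisor is exhibited.

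The single delicate point — and the one I would flag as the main obstacle — is the order-theoretic observation in the second paragraph: \emph{a leading monomial supported in the lower variables forces the entire polynomial to be supported in the lower variables}. This is precisely where lex is indispensable; for graded-lex or graded reverse-lex orders the analogous implication fails, and this failure is the very reason that elimination theory singles out lex. Aside from this step, the argument is simply an unpacking of the definition of Gr\"obner basis, and I do not anticipate further obstacles.
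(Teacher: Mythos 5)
Your proof is correct, and there is nothing in the paper to compare it against: the paper does not prove this theorem at all, but imports it verbatim from its reference \cite{grobner} (it is the standard Elimination Theorem). Your argument is exactly the textbook proof found in such sources, and it correctly identifies the one essential point --- under lex order with $x_1 > \cdots > x_n$, a polynomial whose \emph{leading} monomial avoids $x_1,\ldots,x_{t-1}$ must avoid those variables entirely, which is what promotes the divisor $g \in G$ into $G \cap \mathbb{k}[x_t,\ldots,x_n]$.
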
 
The above theorem is a key tool in finding solutions in many cases in this paper. We have used Macaulay2 \cite{M2} for computing the Gr{\"o}bner basis.
\section{Spectral Solutions to the System of Yang-Baxter Type Matrix Equations  }

As previously mentioned, the system \eqref{main_equation} is closely connected to a single YBE via the following Lemma:
\begin{lemma}\label{block-form} The matrix $X$ solves the system \eqref{main_equation} if and only if the matrix $\X:=\mat{0}{X}{X}{0}$ solves the equation 
\begin{equation}\label{YBE}
 \widehat{A} \X\widehat{A} = \X\widehat{A}\X,   
\end{equation}
 where $\widehat{A} = \begin{pmatrix}
0 & A\\
B & 0
\end{pmatrix} $.
\end{lemma}
\begin{proof}
    We can rewrite \eqref{main_equation} in the block form as follows.

\[\begin{pmatrix}
0 & A\\
B & 0
\end{pmatrix} 
\begin{pmatrix}
0 & X\\
X & 0
\end{pmatrix} 
\begin{pmatrix}
0 & A\\
B & 0
\end{pmatrix} = 
\begin{pmatrix}
0 & X\\
X & 0
\end{pmatrix}
\begin{pmatrix}
0 & A\\
B & 0
\end{pmatrix}
\begin{pmatrix}
0 & X\\
X & 0
\end{pmatrix}.\]
 This gives the result. Note that these are some special solutions for the Yang-Baxter equation 
 \begin{equation}\label{ybe}
   \widehat{A} \widehat{Y}\widehat{A} = \widehat{Y}\widehat{A}\widehat{Y}. 
 \end{equation}
\end{proof}
This transition actually reduces the initial problem: instead of finding all solutions to \eqref{ybe} (which remains an open problem), we restrict to finding all solutions which have the constant anti-diagonal form $\mat{0}{X}{X}{0}$. This relation allows us to generalize numerous results proved for the original YBE (obtained in papers \cite{path,RLT,DR,BDD,first}) to our system \eqref{main_equation}. In this section, we proceed to prove some of those results.

\begin{theorem}\label{l2.13}
    Let $Sol(A,B)$ denote the set of all solutions to \eqref{main_equation}. For any $g \in Gl_n(\mathbb{K})$, $Sol(gAg^{-1},gBg^{-1}) = gSol(A,B)g^{-1} $.  
\end{theorem}
\begin{proof}
    Let $X \in Sol(A,B)$. This happens if and only if,
    \begin{equation*}
        \begin{aligned}
         gXg^{-1}gAg^{-1}gXg^{-1} &= gBg^{-1}gXg^{-1}gBg^{-1}\\
             \text{and }\\
              gXg^{-1}gBg^{-1}gXg^{-1} &= gAg^{-1}gXg^{-1}gAg^{-1}\\
        \end{aligned}
    \end{equation*}
This gives $ gXg^{-1} \in Sol(gAg^{-1},~gBg^{-1}).$
\end{proof}
As a result of the above Theorem \ref{l2.13}, if $A$ and $B$ are simultaneously diagonalizable, then any solutions to \eqref{main_equation} can be obtained from the following equivalent system of equations.
\[YJ_AY = J_BYJ_B \text{ and } YJ_BY = J_AYJ_A,\] where $J_A, J_B$ denote Jordan canonical forms of $A$ and $B$ respectively. Further, if $A$ and $B$ are simultaneously diagonalizable and similar. Then \eqref{main_equation} is equivalent to the Yang-Baxter equation: $YJ_AY = J_A Y J_A$.
\begin{lemma}\label{2.3}
Let $\mathcal{X}$ be set of all simultaneous solutions to \eqref{main_equation} and to the equation $CXC=\ XCX$, where $C:=\ A+B$. Further, assume the pencils $A+\lambda B$ and $A-\lambda B$ are regular matrix pencils. Then, the set $\mathcal{X}$ contains only a unique solution ($X=0$) if and only if the spectra of the pencils have an empty intersection.
\end{lemma}
\begin{proof}
If $X_0 \in \mathcal{X}$, then we have 
$X_0AX_0 = BX_0B$ and $X_0BX_0 = AX_0A$. Adding these equations gives, 
\begin{align}
    X_0(A+B)X_0 = (A+B)X_0(A+B)-(AX_0B+BX_0A).
\end{align}
Since we have $XCX = CXC$, this implies $AX_0B+BX_0A =\ 0$.\\
Note that the above equation, $AX_0B+BX_0A =0 $, is similar to the generalized Sylvester's equation. The rest follows from the generalized Sylvester's equation Theorem  \ref{gen.sylvester} (unique solution case). 
\end{proof}
\begin{theorem}
Let $\mathcal{X}$ be set of all simultaneous solution to \eqref{main_equation} and $CXC=\ XCX$, where $C\ =\ A+B$. The following are true.
\begin{itemize}
    \item[(a)] Let one of the coefficient matrices be non-singular (without loss of generality, let us assume $A$ is non-singular), and the pencils $A+\lambda B$ and $A-\lambda B$ are regular. If $\mathcal{X}$ contains a non-trivial solution, then $BA^{-1}$ has a pair of eigenvalues $(\mu_1,\mu_2)$, such that their sum is zero.
    \item[(b)] If $A$ and $B$ are invertible, and for some $X \in \mathcal{X}$, $(X,A,B)$ commute pairwise, then $X = 0$.
\end{itemize} 
\end{theorem}
\begin{proof}
\begin{itemize}
    \item[(a)]
By Theorem \ref{gen.sylvester} and Lemma \ref{2.3}, if \eqref{main_equation} has a non-trivial solution, then $\sigma(A,B) \cap \sigma(A, -B) $ is non-empty. Let's say $\lambda_0 \in \sigma(A, B) \cap \sigma(A, -B) $. Then,
\[\text{det}(A+\lambda_0 B) = 0 \text{ and } \text{det}(A-\lambda_0 B) = 0\]
\[ \Rightarrow \text{det}(I+\lambda_0 BA^{-1}) = 0 \text{ and } \text{det}(I-\lambda_0 BA^{-1}) = 0 \]
Which means, there exist a $\mu_1, \mu_2 \in \sigma(BA^{-1})$, such that $1+\lambda_0 \mu_1 = 0$ and $1-\lambda_0 \mu_2 = 0$. Now, since $A$ is invertible, $\lambda = 0$ can not be an eigenvalue for the pencils $A+\lambda B$ and $A-\lambda B$. Which, along with the last pair of equations imply $\mu_1 + \mu_2 = 0$.
\item[(b)] If $X,A,B$ are mutually commutative, and $A, B$ are invertible, then by the above lemma, we have \begin{align*}
  AXB+BXA &= 0\\
 \implies 2ABX &= 0
\end{align*}
Which gives, $X=0$.
\end{itemize}
\end{proof}
Similar conditions can be drawn when $B$ is invertible, as the equation $AX_0B+BX_0A =0 $ has symmetry with respect to $A$ and $B$. In that case, corresponding pencils are $B+\lambda A$ and $B-\lambda A$. Also, note that when both $A,B$ are singular, the necessary condition for the existence of a non-trivial solution is trivially satisfied as $0 \in \sigma(A,B)  \cap \sigma(A, -B) $.

\begin{lemma}
Let $X$ be a solution to \eqref{main_equation}. Then for any $\alpha \in \mathbb{K}$, $X+ \alpha C$ is a solution to \eqref{main_equation} for some $C \in M_n(\mathbb{K})$, if $AC=CA=0$ and $BC=CB=0$.
\end{lemma}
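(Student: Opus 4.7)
The plan is to verify directly that $X' := X + \alpha C$ satisfies both equations in (\ref{main_equation}) by expanding and cancelling, using the orthogonality hypotheses $AC = CA = 0$ and $BC = CB = 0$.

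First I would substitute $X' = X + \alpha C$ into $X'AX'$ and expand bilinearly to obtain
\[
X'AX' \;=\; XAX \;+\; \alpha\, XAC \;+\; \alpha\, CAX \;+\; \alpha^2\, CAC.
\]
Each of the three trailing terms contains the product $AC$ or $CA$ as a contiguous factor (grouping $X(AC)$, $(CA)X$, and $(CA)C$ respectively), so all three vanish by hypothesis, leaving $X'AX' = XAX$. The analogous expansion of $B X' B = BXB + \alpha\, BCB$ collapses to $BXB$ because $BCB = (BC)B = 0$. Since $X$ satisfies $XAX = BXB$, we conclude $X'AX' = BX'B$.

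The second equation $X'BX' = AX'A$ is handled identically by swapping the roles of $A$ and $B$: every cross term in $X'BX'$ contains a factor $BC$ or $CB$ and dies, while $AX'A$ reduces to $AXA + \alpha\, ACA = AXA$ using $AC = CA = 0$. The hypothesis that $X$ solves (\ref{main_equation}) then finishes the job.

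There is essentially no obstacle here; the content of the lemma is precisely that the two-sided annihilator of $\{A,B\}$ inside $M_n(\mathbb{K})$ acts on $\mathrm{Sol}(A,B)$ by translation. The only thing worth flagging is that one must use both $AC=0$ and $CA=0$ (and similarly for $B$): the left-zero and right-zero conditions are each needed separately, since one of the cross terms has $C$ on the left of $A$ and the other has $C$ on the right.
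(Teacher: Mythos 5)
Your proof is correct and follows essentially the same route as the paper's: direct substitution of $X+\alpha C$ into both equations of (\ref{main_equation}) and cancellation of all cross terms via $AC=CA=0$ and $BC=CB=0$. The paper's own proof is just a terser version of this verification, so your write-up simply supplies the expansion details the paper leaves implicit.
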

\begin{proof}
We have,
\[ A(X+ \alpha C)A = (X+ \alpha C)B(X+ \alpha C) \]
\[ \text{ and }\]
\[ B(X+ \alpha C)B = (X+ \alpha C)A(X+ \alpha C),\]
is true, if $AC=CA=0$ and $BC=CB=0$.
\end{proof}
\begin{proposition}
Let $X_1,X_2$ be solution to \eqref{main_equation}. Then $X_1+X_2$ is a solution to \eqref{main_equation}, if and only if $X_1AX_2 +X_2AX_1\ =\ 0$ and $X_1BX_2 +X_2BX_1\ =\ 0$.
\end{proposition}
\begin{proof}
\begin{equation*}
    \begin{aligned}
      \Big( A(X_1+X_2)A &= (X_1+X_2)B(X_1+X_2)\\
\text{ and } \\
B(X_1+X_2)B &= (X_1+X_2)A(X_1+X_2)\Big),\\
\iff \Big(AX_1A + AX_2A &= X_1BX_1 +X_1BX_2+X_2BX_1+X_2BX_2\\
\text{ and }\\
BX_1B + BX_2B &= X_1AX_1 +X_1AX_2+X_2AX_1+X_2AX_2\Big),\\
\iff (X_1AX_2+X_2AX_1 &= 0 \text{ and } X_1BX_1 +X_1BX_2\ = 0). 
    \end{aligned}
\end{equation*}

\end{proof}
\begin{definition}
A solution $X$ is commutative if $[A,X]=[B,X]=0$. Otherwise, the solution $X$ is non-commutative.
\end{definition}
The following result is an extension from the theorem given in \cite{path}, which states for an arbitrary square matrix $A$, there are no isolated nontrivial solutions to the YBE $AXA\ =\ XAX$, which do not commute with $A$. Note that the space of solutions to the system of Yang-Baxter equations gets a subspace topology from the usual topology of $M_n(\mathbb{K})$ ( where $\mathbb{K}$ is $\mathbb{R}$ or $ \mathbb{C}$). Considering this topology, the authors of the papers \cite{path,RLT,manifold} prove a very interesting result about the manifold structure of the space of the solutions, and as a consequence, they get results on the ``isolated" solutions (i.e., the singleton path components of the space of solutions). In the same spirit, we prove the following.
\begin{theorem}
Let $A$ and $B$ commute with each other. Then, any non-commutative solution to \eqref{main_equation} is not isolated in the solution set.
\end{theorem}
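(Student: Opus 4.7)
The strategy is the classical one of producing an explicit analytic one-parameter family of solutions through $X$: since $X$ is non-commutative, at least one of $[X,A]$ and $[X,B]$ is nonzero, and we can exploit conjugation by $e^{tA}$ (or $e^{tB}$), because $AB=BA$ forces such exponentials to lie in the simultaneous centralizer of $\{A,B\}$.

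Without loss of generality assume $[X,A]\neq 0$ (the other case is handled identically with $A$ and $B$ swapped). Define
\[ X_t\ :=\ e^{-tA}\, X\, e^{tA}. \]
The first step is to check that $X_t$ satisfies (\ref{main_equation}) for every $t$. Since $e^{tA}$ commutes with $A$ trivially and with $B$ because $AB=BA$, one slides the exponentials through to get
\[ X_t A X_t\ =\ e^{-tA}(XAX)\,e^{tA}\ =\ e^{-tA}(BXB)\,e^{tA}\ =\ B\,X_t\,B, \]
and analogously $X_t B X_t = A X_t A$. Hence the whole curve $\{X_t\}_{t\in\mathbb{K}}$ lies in the solution set of (\ref{main_equation}).

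The second step is to see that the curve is genuinely non-constant near $t=0$. Differentiating the defining formula yields
\[ \left.\frac{d}{dt}\right|_{t=0}X_t\ =\ [X,A], \]
which is nonzero by assumption. Hence $X_t$ is a non-constant real-analytic curve through $X$ at $t=0$, so every open neighborhood of $X$ in the solution set contains solutions $X_t\neq X$; this is precisely the statement that $X$ is not isolated.

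The remaining case $[X,A]=0$, $[X,B]\neq 0$ is handled by replacing $e^{tA}$ with $e^{tB}$ throughout; the verification is identical because $[A,B]=0$ again guarantees $e^{tB}$ commutes with both coefficients. If $X$ commuted with both $A$ and $B$ it would be a commutative solution, contrary to hypothesis, so one of the two cases must occur. No serious obstacle is expected: the whole argument rests on the single observation that the simultaneous centralizer of $\{A,B\}$ acts on the solution set by conjugation, and it contains $A$ and $B$ themselves, which is exactly enough room to move any non-commutative solution.
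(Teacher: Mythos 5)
Your proof is correct, and it follows the same underlying strategy as the paper: conjugate $X$ by a one-parameter group lying in the simultaneous centralizer of $\{A,B\}$, so that the conjugates remain solutions and trace an analytic path through $X$. The difference is the choice of conjugator, and it matters. The paper conjugates by $e^{BAt}$, producing $X_t = e^{-BAt}X_0e^{BAt}$, verifies that each $X_t$ solves the system, and then concludes that $X_0$ ``lies in a path of solutions''; it never checks that this path is non-constant. That check is not automatic: non-constancy of the paper's path requires $[X_0,BA]\neq 0$, which does not follow from $X_0$ failing to commute with $A$ and $B$ (for instance, if $A$ is invertible and $B=A^{-1}$, then $BA=I$ and the paper's path is constant for \emph{every} solution $X_0$, commutative or not). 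Your choice of $e^{tA}$ (or $e^{tB}$) repairs exactly this point: the derivative of your curve at $t=0$ is $[X,A]$ (respectively $[X,B]$), which is nonzero by hypothesis, so the curve is genuinely non-constant and $X$ is an accumulation point of distinct solutions. In short, your argument closes the gap between ``lies on a path of solutions'' and ``is not isolated'' that the paper's version leaves open. One shared caveat, inherited from the paper rather than introduced by you: the use of matrix exponentials and the limit $t\to 0$ tacitly requires $\mathbb{K}$ to be $\mathbb{R}$ or $\mathbb{C}$ (or at least a complete normed field), even though the theorem is stated over a general field.
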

\begin{proof}
Let $X_0$ be a non-trivial solution to \eqref{main_equation}, which does not commute with both $A$ and $B$. Then we have, for any $t \in \mathbb{K}$, $e^{-BAt}X_0e^{BAt}$ is also a solution to \eqref{main_equation}. We have, as $A$ and $B$ commute, $A$ commute with $e^{BAt}$. This implies,
\begin{align*}
A(e^{-BAt}X_0e^{BAt})A\ &=\ e^{-BAt}AX_0Ae^{BAt}\\
&= e^{-BAt}X_0BX_0e^{BAt}\\
&= (e^{-BAt}X_0e^{BAt})B(e^{-BAt}X_0e^{BAt}).
\end{align*}
Similarly, we can see that,
\[B(e^{-BAt}X_0e^{BAt})B= (e^{-BAt}X_0e^{BAt})A(e^{-BAt}X_0e^{BAt})\] 
This gives, $e^{-BAt}X_0e^{BAt}$ is a solution to \eqref{main_equation}. As a consequence, we can see $X_0$ lies in a path of solutions inside the solution space.
\end{proof}
\subsection{Invertibility of the coefficient matrices}

Spectral properties of the coefficient matrices $A$ and $B$ impose certain spectral restrictions on the solution matrices $X$. Below, we demonstrate how the regularity of the input matrices dictates the behaviour of the solutions.

\begin{theorem}[Necessary conditions for regular solutions]
Let $A$ and $B$ be non-singular matrices. Then:
\begin{itemize}
\item[(a)] Equations in \eqref{main_equation} have a non-singular solution, only if $det(A^3)= det(B^3)$. In particular, if $A$ and $B$ have only real eigenvalues, then there exists a non-singular solution to \eqref{main_equation} only if $det(A) = det(B)$. 
\item[(b)] If the equations in \eqref{main_equation} do indeed have a non-singular solution $X$, then $X^2$ must be similar to $BA$.
\end{itemize}
\end{theorem}
\begin{proof}\begin{itemize}
\item[(a)] We have,
  \[det(AXA) = det(XBX) \text{ and } det(BXB)= det(XAX)\]
  \[\Rightarrow det(A^2)det(X)= det(X^2)det(B)\]
 \[ \text{ and }
  det(B^2)det(X)= det(X^2)det(A).\]
Now, if $X$ is a non-singular solution, then the above equations give, 
\[ det(X) = \frac{det(A^2)}{det(B)} = \frac{det(B^2)}{det(A)}.\]
Which gives, $det(A^3) = det(B^3)$. Further, if $A$ and $B$ have only real eigenvalues, then $det(A) = det(B)$. 
\item[(b)] Let $X$ be an invertible solution for \eqref{main_equation}. 
\[BXB\ =\ XAX\]
\[\Rightarrow B\ =\ X^{-1}B^{-1}XAX\]
\[\Rightarrow BA\ =\ X^{-1}B^{-1}XA A\]
\[\Rightarrow  BA\ =\ X^{-1}B^{-1}X^2BX\]
This gives $X^2$ is similar to $BA$. In other words, $\sigma(X^2) = \sigma(BA)$.
\end{itemize}
\end{proof}
Non-regular solutions also exhibit a certain invariance under the regular coefficient matrices $A$ and $B$:

\begin{lemma} Let $A$ and $B$ be invertible and let $X$ be a singular solution to \eqref{main_equation}. Then $Ker(X)$ is invariant under $A$ and $B$, i.e., $XBv=0$ and $XAv=0$ for every $v\in Ker(X)$.
\begin{proof}
We have $XAXv=0$, therefore $BXBv=0$. Since B is invertible, this gives $XBv=0$. Similarly, we can show that $XAv=0$.
\end{proof}
\end{lemma}
The following result gives an important insight into the solutions when only one input matrix is regular. 
\begin{theorem} \label{eigenvalue-relation}
Let one of the coefficient matrices be singular and the other be non-singular, say $A$ is singular, and $B$ is a non-singular matrix. The following statements are true:
\begin{itemize}
    \item[(a)] Equations in \eqref{main_equation} do not have a non-singular solution. 
    \item[(b)] If $u \in Ker(A)$, then either $u \in Ker(X)$ or $BXu \in Ker(X)$, for any solution $X$ to \eqref{main_equation}.
    \item[(c)] Let $X$ be a solution to \eqref{main_equation} and assume there exists a $\lambda \in \sigma(BA)$ such that $\lambda \notin \sigma(X^2)$. Then $X$ annihilates $E_\lambda(BA)$, the eigen space of $BA$ corresponding to the eigenvalue $\lambda$. Furthermore, the solution $X$ annihilates the generalized eigenspace of $BA$ corresponding to the eigenvalue $\lambda$, $P_\lambda (BA).$
\end{itemize}
\end{theorem}
\begin{proof} Without loss of generality, assume that $A$ is singular and $B$ is non-singular. 
\begin{itemize}
    \item[(a)] Let $v ( \neq 0 ) \in Ker(A)$. Then, $AXAv = 0$, gives $XBXv =0$. Now, if $Xv = 0$, then $v \in Ker(X)$ and X is singular. If $Xv \neq 0$, then as B is non-singular $BXv \neq 0$. That means, $BXv \in Ker(X)$, and X is singular.\\
    \item[(b)] If $u \in Ker(A)$, then we have $AXAu=0$, which gives $BXBu=0$. If $u \notin Ker(X)$, then $BXu (\neq 0) \in Ker(X)$, as B is invertible, for any solution $X$.
    \item[(c)] Let $X$ be a solution to \eqref{main_equation} and let $\lambda$ be such that $\lambda\in\sigma(BA)\setminus\sigma(X^2)$. Respectively, let $v(\neq 0)\ \in\ E_\lambda(BA)$. Then we have
    \begin{align*}
        BAv\ &=\ \lambda v\\
        BXBAv\ &=\ \lambda BX v\\
        XAXAv\ &=\ \lambda BX v\\
        X^2 BXv\ &=\ \lambda BXv\\
    \end{align*}
    Now, since $\lambda \notin \sigma(X^2)$, $BXv$ must be zero. As $B$ is invertible, this gives $Xv\ =\ 0$.\\
    i.e., $XE_\lambda(BA)\ =\ 0$. 
    
    Furthermore, let $\{v_1,v_2,...,v_r\}$ be the canonical basis for the generalized eigenspace $P_\lambda$. Then we have
    \[BAv_1\ =\ \lambda v_1\]
    \[BAv_i\ =\ v_{i-1} + \lambda v_i, \text{ for }i=2,3,..,r.\]
    But from the previous part of the claim, we have $BAv_1\ =\ 0$. Now,
    \begin{align*}
        XBAv_2\ &=\ Xv_1+ \lambda Xv_2\\
        \Rightarrow XBAv_2\ &=\ \lambda Xv_2, \text{  (as $v_1 \in E_\lambda (BA)$, and $Xv_1=0$)}\\
        \Rightarrow BXBAv_2\ &=\ \lambda BXv_2\\
        \Rightarrow XAXAv_2\ &=\  \lambda BXv_2\\
        \Rightarrow X^2BXv_2\ &=\ \lambda BXv_2.
    \end{align*}
    We have $\lambda \notin \sigma(X^2)$ and $B$ invertible, which gives $Xv_2\ =\ 0$. Continuing this process through induction, we have $XP_\lambda(BA)\ =\ 0 $. 
\end{itemize}
\end{proof}

The previous results point out that for any nonzero solution $X$ to \eqref{main_equation}, the matrices $X^2$ and $BA$ must have common eigenvalues. This can be further exploited, as the following analysis shows:
\begin{theorem}
Let $X$ be a solution to \eqref{main_equation}. The following are true.
\begin{itemize}
    \item [(a)] For any polynomial $f \in \mathbb{K}[X]$, $f(X^2)BX = BX f(BA)$. In particular, $\Phi_{BA}(X^2)BX=0$, where $\Phi_{BA}$ is the characteristic polynomial of $BA$.
    \item[(b)]  For every solution $X$ it holds that $e^{X^2}BX\ =\ BXe^{BA}$.
\end{itemize}
\end{theorem}
\begin{proof}
\begin{itemize}
    \item[(a)]We have $XAXA=BXBA$, which implies $X^2BX=BX(BA)$. Now,
    \begin{align*}
        X^4BX &=X^2BX(BA)\\
         &= XAXABA\\
         &= BX (BA)^2
    \end{align*}
    Then by continuing this process, we have $X^{2n}BX=BX(BA)^n$. This implies that, for any polynomial $f$, $f(X^2)BX = BX f(BA)$. In particular, for the characteristic polynomial $\Phi_{BA}$ of $BA$, the later equation can be written as $\Phi_{BA}(X^2)BX=0$.\\
    $(b)$ This part follows immediately from the above proof. 
\end{itemize}
\end{proof}
 For the above Sylvester equation, $e^{X^2}BX = BXe^{BA}$, if $BX \neq 0$, then the matrix $BX$
is a non-zero solution. This implies,  that the
spectra of $e^{X^2}$ and $e^{BA}$ intersect, or, equivalently,
\[\sigma(X^2)\cap (\sigma(BA)+2i\mathbb{Z}\pi) \neq \varnothing.\]
Conversely, if the above sets do not intersect, then $BX = 0$, which forces $X$ to be $0$. 

\subsection{Intertwining Solutions}
A particularly interesting class of solutions is the class of intertwining solutions:
\begin{definition} A nonzero solution $X$ to \eqref{main_equation} is said to be an intertwining solution if $AX=XB$ and $BX=XA$.\end{definition}

These solutions coincide with the class of commuting solutions when $A=B$ and the single YBE $AXA=XAX$ is considered. However, when $A\neq B$ and $X\neq0$ the equality $AX=XB$ and $XA=BX$ is not automatically achieved. 

The existence of intertwining solutions implies that $A$ and $B$ have common eigenvalues. All intertwining solutions can be obtained by intersecting the set of solutions to \eqref{main_equation} with the set of solutions to the homogeneous Sylvester equation $AX=XB$. Below, we provide a much simpler way for intertwining solutions characterization in the case when $A$ and $B$ are regular matrices. Recall the block form \eqref{YBE} from Lemma \ref{block-form}.

\begin{lemma} \label{int}The nonzero matrix $X$ is an intertwining solution to \eqref{main_equation} if and only if the corresponding anti-diagonal matrix $\X=\mat{0}{X}{X}{0}$ is a nonzero commuting solution to \eqref{YBE}. 
\end{lemma}

It is not hard to see that the block-matrix $\A=\mat{0}{A}{B}{0}$ is invertible if and only if $A$ and $B$ are invertible. In that case, the inverse of $\A$ is given as 
$$\A^{-1}=\mat{0}{B^{-1}}{A^{-1}}{0}.$$ 

Thus we proceed to find all commuting solutions to \eqref{YBE} when $\widehat{A}$ is an invertible matrix. Then by Lemma \ref{int}, all those solutions which have the constant anti-diagonal form $\mat{0}{X}{X}{0}$ will indeed be all intertwining solutions for \eqref{main_equation}.

For the given invertible $\A$, its square root $\sqrt{\A}$ denotes the set of all matrices $L$ such that $L^2=\A$. Obviously, the set $\sqrt{\A}$ is non-empty. In \cite{RLT}, Din\v ci\'c proved the following theorem which characterizes all commuting solutions to \eqref{YBE} for the regular coefficient matrix:

\begin{theorem}\label{Dinc}\cite[Theorem 4.2]{RLT} \textbf{(Din\v ci\'c)}
If $\widehat{A}$ is invertible, then
all commuting solutions of \eqref{YBE} are given by the closed-form formula
\begin{equation}\label{comybe}
\widehat{X}=\frac{1}{2}\left(\A+\sqrt{\left(\A\right)^2}\right).
\end{equation}
\end{theorem}

\begin{theorem} Let $A$ and $B$ be regular matrices and let $\A$ be given as above. There exists an intertwining solution to \eqref{main_equation} if and only if there exists a matrix $E$ with the following properties:
\begin{itemize}
\item[(a)] The matrix $A+E$ is regular.
\item[(b)] The equality $A^{-1}EB=BEA^{-1}$ holds.
\item[(c)] The matrix $E$ solves the equation
\begin{equation}
\label{eqE}
(2A+E-2B)+(2A+E-B)EA^{-1}=0.\end{equation}
\end{itemize}
In that case, corresponding to each $E$, a unique intertwining solution $X_E$ to \eqref{main_equation} is defined by the following equivalent system of equations:
\begin{equation}
\label{intertwE}2X_E:=2A+E=B+(A+E)^{-1}AB=B+BA(A+E)^{-1},
\end{equation}
where $E$ is an arbitrary matrix with properties $(a)-(c)$.
\end{theorem}

\begin{proof} By Lemma \ref{int} there exists an intertwining solution $X$ to \eqref{main_equation} if and only if the corresponding block matrix $\mat{0}{X}{X}{0}$ is a nonzero commuting solution to \eqref{YBE}. On the other hand, Theorem \ref{Dinc} yields that all commuting solutions to \eqref{YBE} are given by the expression \eqref{comybe} above. Therefore, the nonzero matrix $X$ is an intertwining solution to \eqref{main_equation} if and only if $\mat{0}{X}{X}{0}$ belongs to the set 
$$\frac{1}{2}\left(\A+\sqrt{\left(\A\right)^2}\right).$$
First note that 
$$\left(\A\right)^2=\mat{AB}{0}{0}{BA}.$$
Let $L=\mat{L_1}{L_2}{L_3}{L_4}$ be an arbitrary member from the set $\sqrt{\left(\A\right)^2}$. Then $L^2=\left(\A\right)^2$, that is,
$$\mat{L_1^2+L_2L_3}{L_1L_2+L_2L_4}{L_3L_1+L_4L_3}{L_3L_2+L_4^2}=\mat{AB}{0}{0}{BA}.$$
In other words, all such matrices $L$ solve the following system of equations
\begin{equation}
\label{Lsys}
\begin{cases}
L_1^2+L_2L_3=AB,\\
L_1L_2+L_2L_4=0,\\
L_3L_1+L_4L_3=0,\\
L_3L_2+L_4^2=BA.
\end{cases}
\end{equation}
Combining the above observations, we conclude that
$$\mat{0}{X}{X}{0}\in\left\{\left.\frac{1}{2}\mat{L_1}{A+L_2}{B+L_3}{L_4}\right| \textrm{ such that \eqref{Lsys} holds} \right\}.$$
It immediately follows that  $L_1=L_4=0$.
Respectively, the system \eqref{Lsys} reduces to $L_2L_3=AB$, $L_3L_2=BA$ and
$$\mat{0}{X}{X}{0}\in\left\{\left.\frac{1}{2}\mat{0}{A+L_2}{B+L_3}{0}\right| \ L_2L_3=AB, \  L_3L_2=BA \right\}.$$
Since $A$ and $B$ are invertible, we have $L_2$ and $L_3$ are invertible as well. Notice that $L_2=A\Leftrightarrow L_3=B$ and, in that case, $A=B=X$, which is the trivial case. Similarly, $L_2=B\Leftrightarrow L_3=A$, and in that case the solution to \eqref{main_equation} would be precisely $\frac{1}{2}(A+B)$, which intertwines $A$ and $B$ if and only if $A$ and $B$ commute, and the proof is complete (in that case the matrix $E$ is precisely $E=B-A$). Otherwise, assume there exists a nonzero mat ix $E$ such that $L_2=A+E$. Notice that $E$ must be such that $A+E$ is invertible. In that case, from $L_2L_3=AB$ we have 
 $$L_3=(A+E)^{-1}AB=(A(I+A^{-1}E))^{-1}AB=(I+A^{-1}E)^{-1}A^{-1}AB=(I+A^{-1}E)^{-1}B$$ while from $L_3L_2=BA$ we have $$L_3=BA(A+E)^{-1}=BA((I+EA^{-1})A)^{-1}=B(I+EA^{-1})^{-1}.$$ 
 Therefore, the system $$\begin{cases}
 L_2L_3=AB,\\L_3L_2=BA
 \end{cases}$$ 
 is solved for $L_2=A+E$ and $L_3=B(I+EA^{-1})^{-1}$ if and only if 
 $$BEA^{-1}=A^{-1}EB,$$
 and in that case of  course $$L_3=B(I+EA^{-1})^{-1}=(I+A^{-1}E)^{-1}B.$$
 Returning to our problem, it follows that the matrix $X$ is an intertwining solution to \eqref{main_equation} if and only if 
 \begin{eqnarray}
 \label{antidiag}
 \begin{aligned}
 &\mat{0}{X}{X}{0}\in\\
 &\left\{\left.\frac{1}{2}\mat{0}{A+A+E}{B+(I+A^{-1}E)^{-1}B}{0}\right| \ (A+E)-\textrm{regular matrix}, \  BEA^{-1}=A^{-1}EB \right\}.
 \end{aligned}\end{eqnarray}
Equating the positions (1,2) and (2,1) in the latter gives
$$\begin{aligned}
&2A+E=(I+(I+A^{-1}E)^{-1})B\Leftrightarrow\\
&2A+E=(I+A^{-1}E)(I+A^{-1}E)^{-1}B+(I+A^{-1}E)^{-1}B\Leftrightarrow\\
&2A+E=(2I+A^{-1}E)(I+A^{-1}E)^{-1}B\Leftrightarrow\\
&2A+E=(2I+A^{-1}E)B(I+EA^{-1})^{-1}\Leftrightarrow\\
&(2A+E)(I+EA^{-1})=(2I+A^{-1}E)B\Leftrightarrow\\
&A(I+EA^{-1})+(I+EA^{-1})A(I+EA^{-1})=(2I+A^{-1}E)B\Leftrightarrow\\
&2A+E+2AEA^{-1}+E^2A^{-1}=2B+A^{-1}EB=2B+BEA^{-1}\Leftrightarrow\\
&(2A+E-2B)+(2A+E-B)EA^{-1}=0.
\end{aligned}$$
Any such matrix $E$, for which $A+E$ is invertible and $BEA^{-1}=A^{-1}EB$, and which solves the equation above
$$(2A+E-2B)+(2A+E-B)EA^{-1}=0$$
defines one particular intertwining solution to \eqref{main_equation}: $$2X_E:=2A+E=B+(I+A^{-1}E)^{-1}B=B+(A+E)^{-1}AB=B+BA(A+E)^{-1}.$$
Similarly, every intertwining solution $X$ to \eqref{main_equation} implies that $\mat{0}{X}{X}{0}$ is a commuting solution to \eqref{YBE}, thus it is contained in the general formula \eqref{comybe}, which written down reduces to the anti-diagonal family \eqref{antidiag} above. Equating the positions (2,1)  and (1,2) gives the sought properties for the matrix $E$.
\end{proof}

\section{Doubly-stochastic solutions}

Permutation and doubly-stochastic matrices play a crucial role in braid group representations, and are closely connected to the original YBE. In this section we analyze the initial system \eqref{main_equation} and search for doubly-stochastic solutions. As it turns out, this is scenario where a system of YBEs behaves quite differently than a single YBE.\\

We start by revisiting some results regarding the single YBE $AXA=XAX$, where $A$ is an invertible matrix. Recall Brouwer fixed point theorem (see e.g. \cite{BEMHMYA}):

\begin{theorem}[Brouwer] Let $\Omega$ be a convex and compact set in the Euclidean space $\mathbb{R}^n$ and let $f:\Omega\to\Omega$ be a continuous mapping in the respective topology. Then, $f$ has a fixed point in $\Omega$.
\end{theorem}

Recall that the polytope of square doubly stochastic matrices (d. s. matrices for short) is a convex and compact set $\mathbb{R}^{n \times n}$, and the permutation matrices are precisely its vertexes (its extreme points). Respectively, any doubly-stochastic matrix can be expressed as a convex combination of permutation matrices. Conversely, if $A$ and $A^{-1}$ are doubly-stochastic, then $A$ is a permutation matrix.

The above discussion requires distinguishing two cases for the invertible coefficient matrix $A$: The case where both $A$ and $A^{-1}$ are doubly-stochastic matrices, and the case where one of them is a d. s. matrix while the other one (out of $A$ and $A^{-1}$) is not a d. s. matrix. These two separate cases have been studied in \cite{BDD} and \cite{DR}, respectively.

\begin{theorem}[Djordjevi\'c]\label{DSDjordj}\cite{BDD} Let $A$ be a permutation matrix. There exists a nontrivial doubly-stochastic solution to $AXA=XAX$ if and only if $A$ has at least one fixed point (i.e., its main diagonal contains at least one 1).\end{theorem}

Mathematically speaking, the equation $AXA=XAX$ is equivalent to $AX=A^{-1}(AX)^2A^{-1}$, which is suitable for the Brouwer's fixed point theorem. This observation was exploited by Ding and Rhee in \cite{DR}, where they obtained the following:

\begin{theorem}[Ding-Rhee]\cite{DR} Let $A$ be an invertible matrix, such that $A^{-1}$ is a doubly-stochastic matrix. Then the YBE $AXA=XAX$ has a doubly-stochastic solution. If $A$ is in addition not a doubly-stochastic matrix itself, then the aforementioned solution is non-trivial.
\end{theorem}

Returning to our initial system \eqref{main_equation}, assume $A$ and $B$ are distinct invertible matrices and recall the block-form \eqref{YBE} from Lemma \ref{block-form}:
$$\widehat{A}=\mat{0}{A}{B}{0}.$$
Notice that $\A$ is a permutation matrix if and only if $A$ and $B$ are permutation matrices as well. In addition, \eqref{YBE} has a doubly-stochastic anti-diagonal solution $\mat{0}{X}{X}{0}$ if and only if the matrix $X$ is a doubly-stochastic solution to \eqref{main_equation}. We proceed to examine the existence of d. s. solutions to \eqref{main_equation}  under these conditions.

\begin{definition}
    A matrix $A= (A_{ij}) \in M_{n\times m}(\mathbb{R})$ is called point-wise non-negative, if for each $i,j$, $A_{ij} \geq 0$.
\end{definition}
\begin{lemma}\label{d.s.product}
    Let $A$ be a doubly stochastic matrix, and for some point-wise non-negative matrix $D$, $DA$ and $AD$ are doubly stochastic. Then, $D$ is a doubly stochastic matrix. 
\end{lemma}
\begin{proof}
    Let $M:= AD$. Then, 
    \[M_{ij} = \sum_{k=1}^n A_{ik}D_{kj}.\]
    Now, the $j^{th}$ column sum of the doubly stochastic matrix $M$ is given by,
    
        \[ \sum_{i=1}^n M_{ij} =~ 1\]
         \[ \implies   \sum_{i=1}^n(\sum_{k=1}^n A_{ik}D_{kj}) =1\]
         \[\implies    D_{1j}(\sum_{i=1}^nA_{i1})+ D_{2j}(\sum_{i=1}^nA_{i2}) + \dots + D_{nj}(\sum_{i=1}^nA_{in}) = 1\]
         \[\implies  \sum_{i=1}^n D_{ij} = 1 \]
As $D_{ij} \geq 0$, this gives $D$ is column stochastic. Similarly, from $DA$ being doubly stochastic, we get $D$ is row stochastic. Together, this implies $D$ is doubly stochastic.
\end{proof}

\begin{theorem} Let $A$ and $B$ be two different non-negative invertible matrices, such that $A^{-1}$ and $B^{-1}$ are doubly-stochastic matrices. If at least one of them is a permutation matrix, then there are no doubly-stochastic solutions to the initial system \eqref{main_equation}.

\end{theorem}
\begin{proof}
First, assume that both $A$ and $B$ are permutation matrices. This makes the matrix $\widehat{A}$ to be a permutation matrix. By the discussion above, \eqref{main_equation} has a d. s. solution $X$ if and only if $\mat{0}{X}{X}{0}$ is a d. s. solution to the YBE \eqref{YBE}. However, due to Theorem \ref{DSDjordj}, this is impossible since $\widehat{A}$ has no 1s on its main diagonal.  

Now assume that $B$ is a permutation matrix, while $A$ is not. Then, for $Y = \begin{pmatrix}
       Y_1&0\\
       0&Y_2
   \end{pmatrix}$, where $Y_i \in \Omega_n$, the set of $n \times n$ doubly-stochastic matrices, consider the following equation:
    \[Y= \widehat{A}^{-1} Y^2\widehat{A}^{-1}.\]
    By the Brouwer's fixed point theorem, the above equation has a fixed point, say, $Y' =  \begin{pmatrix}
       Y_1'&0\\
       0&Y_2'
   \end{pmatrix} $. This gives rise to a solution $X_0$ to the initial system of equations, if and only if $X_0 = A^{-1}Y'_1 = B^{-1}Y'_2$, and $X_0 = Y'_1  A^{-1}= Y'_2 B^{-1}.$ By construction, $X_0$ is a d. s. solution and 
   $Y_1' = AB^{-1}Y_2'$ and $Y_1' = Y_2'B^{-1}A$.  Lemma \ref{d.s.product} implies that $A$ is a d. s. matrix as well. Since $A$ and $A^{-1}$ are d.s, $A$ must be a permutation matrix. This contradicts the initial assumption that $A$ is not a permutation matrix.  
\end{proof}

Since the above theorem disproves the existence of d. s. solutions if at least one of the matrices is a permutation, we provide an alternative way to explore the solutions.

\begin{theorem}[Fixed point approach]\label{nec}  Let $A$ and $B$ be invertible matrices. The following statements are true:
\begin{itemize}
\item[(a)] If $X_0$ solves \eqref{main_equation} then $AX_0$ solves the fixed point problem
\begin{equation}
\label{Yfixed}
Y=A(AB)^{-1}Y^2(AB)^{-1}A.
\end{equation}
while $BX_0$ solves the fixed point problem \begin{equation}\label{Zfixed}
Z=B(BA)^{-1}Z^2(BA)^{-1}B.
\end{equation}
\item[(b)] Let $Y_0$ and $Z_0$ be two solutions to the fixed point problems \eqref{Yfixed} and \eqref{Zfixed} respectively. If $A^{-1}Y_0=B^{-1}Z_0=:X_0$, then $X_0$ solves the initial system \eqref{main_equation}.
\end{itemize}
\end{theorem}
\begin{proof} $(a)$ We have $XB=B^{-1}XAX$ and $XA=A^{-1}XBX$, therefore
$$\begin{aligned}
&XB=B^{-1}(XA)X=B^{-1}A^{-1}XBX^2=\\
&(AB)^{-1}XBX^2=(AB)^{-1}AXAX=(AB)^{-1}(AX)^2\Leftrightarrow\\
&X=(AB)^{-1}(AX)^2B^{-1}A^{-1}A=(AB)^{-1}(AX)^2(AB)^{-1}A\Leftrightarrow\\
&AX=A\left((AB)^{-1}(AX)^2(AB)^{-1}\right)A.
\end{aligned}$$
Substituting $Y=AX$ gives 
\begin{equation}
\label{yfixed}
Y=A(AB)^{-1}Y^2(AB)^{-1}A.
\end{equation}
Analogously, from $XA=A^{-1}XBX=A^{-1}B^{-1}XAX^2$ we get
$$BX=B((BA)^{-1}(BX)^2(BA)^{-1})B,$$
and substituting $Z=BX$ we get 
\begin{equation}\label{zfixed}
Z=B(BA)^{-1}Z^2(BA)^{-1}B.
\end{equation}
$(b)$ Conversely, let $Y$ and $Z$ be solutions to \eqref{Yfixed} and \eqref{Zfixed}, respectively such that $A^{-1}Y=B^{-1}Z=X$. Then 
$$A^{-1}Y=B^{-1}A^{-1}YYB^{-1}A^{-1}A=B^{-1}(A^{-1}Y)YB^{-1}\Leftrightarrow BXB=XY=XAX.$$
Analogously 
$$B^{-1}Z=A^{-1}B^{-1}ZZA^{-1}B^{-1}B=A^{-1}XZA^{-1}\Leftrightarrow AXA=XBX.$$
\end{proof}

\begin{corollary}
Let $A$ and $B$ be different permutation matrices. Let $\mathcal{Y}$ be the set of all doubly-stochastic solutions to the equation \eqref{Yfixed} and let $\mathcal{Z}$ be the set of all doubly-stochastic solutions to \eqref{Zfixed} (both of these sets are non-empty due to the Brouwer's fixed point theorem). Then 
$$(\forall Y\in\mathcal{Y}) (\forall Z\in\mathcal{Z})\quad A^{-1}Y\neq B^{-1}Z.$$
\end{corollary}
\begin{corollary}
Let $A$ and $B$ be invertible matrices, such that $A^{-1}$ and $B^{-1}$ are doubly-stochastic matrices while $A$ and $B$ are not doubly-stochastic matrices. If the system of equations 
\begin{equation}
A^{-1}YA^{-1}=(AB)^{-1}Y^2(AB)^{-1},\quad B^{-1}ZB^{-1}=(BA)^{-1}Z^2(BA)^{-1}
\end{equation}
has a doubly-stochastic solution $(Y_0,Z_0)$ such that $A^{-1}Y_0=B^{-1}Z_0=:X_0$, then the doubly-stochastic matrix $X_0$ is a solution to \eqref{main_equation}.
\end{corollary}

\section{Idempotent Orthogonal Complements}
Let us recall that two square matrices $A$ and $B$ are called idempotent orthogonal complements when $A^2=A, B^2 = B$, and $AB=BA=0$. This section investigates solutions to \eqref{main_equation} when $A$ and $B$ are idempotent orthogonal complements. As $A$ and $B$ are idempotent, their spectra are subsets of $\{0,1\}$, also both are diagonalizable.\\
Let $E_A(\lambda_i)$ denote the eigenspace of $A$ with respect to the eigenvalue $\lambda_i$. Similarly, for $B$, $E_B(\lambda_j)$ denotes the eigenspace of $B$ with respect to the eigenvalue $\lambda_j$. Now, for a non-zero vector $v \in E_B(1)$, we have $Bv=v$. This gives, $ABv=0$, implies $Av=0$. Similarly, any eigenvector for $A$ corresponding to eigenvalue one must belong to $Ker(B)$. Hence, we have the following inclusions.
\[ E_B(1) \subset E_A(0)\]
\[ E_A(1) \subset E_B(0)\]  
Analogously, we can see that any column vector of $B$ belongs to $Ker(A)$, and vice versa. As we discussed above, both $A$ and $B$ are diagonalizable and commute. Hence, they are simultaneously diagonalizable. i.e- There exist an invertible matrix $S \in GL_n(\mathbb{K})$, such that $J_A= SAS^{-1}$ and $J_B= SBS^{-1}$. We say two matrices $A, B \in M_n(\mathbb{K})$ are similar, denoted by $A \simeq B$, if there exists a $P \in GL_n(\mathbb{K})$, such that $A = PBP^{-1}$. Then, as a consequence of Theorem \ref{l2.13}, without loss of generality, we can assume both $A$ and $B$ are in their Jordan-canonical forms. Also, since their spectra are subsets of $\{0,1\}$, we have, for an appropriate $r$, $A \simeq \begin{pmatrix}
I_r&0\\
0&0
\end{pmatrix}$, where $I_r$ is identity matrix of order r, and $0's$ are zero blocks in appropriate dimensions. To avoid the trivial cases of equations in \eqref{main_equation}, let's take $0 < r< n$.
Now, by the above inclusions for eigenspaces of $A$ and $B$, we have $nullity(B) \geq rank(A)$. This leads to, $B \simeq \begin{pmatrix}
0&0\\
0& I_m
\end{pmatrix}$ , such that $n-m \geq r$.\\
Throughout the discussion, $J$ represents the following matrix with appropriate dimensions.
\[\text{ Let } J_2 =\ \begin{pmatrix}
    0&1\\
    0&0
\end{pmatrix} \text{ and }
J = diag(J_2,J_2, \dots, J_2, 0,0, \dots, 0).\] 
\begin{lemma}
    Let $A$, $B$ be two idempotent orthogonal complement matrices, and $X$ be a solution to \eqref{main_equation} for this given pair. Then $AX$ and $BX$ are nilpotent matrices with index at most 2. 
\end{lemma}
\begin{proof}
    We have, \begin{align*}
        AXA &= XBX\\
        BAXA &= BXBX\\
        \implies (BX)^2 &= 0. 
    \end{align*}
Similarly, we get $(AX)^2=0$. 
\end{proof}
\begin{theorem}
Let $A$ and $B$ be two idempotent matrices, which are orthogonal complements and are in their Jordan-canonical forms as considered above, such that $rank(A) = nullity(B) = r$. Then, any solution to \eqref{main_equation} has one of the following forms.
\begin{itemize}
    \item[(a)] $X= \begin{pmatrix}
     0&C\\
     D&0
    \end{pmatrix}$, such that $CD=0$ and $DC = 0$.
    \item[(b)] $X = \begin{pmatrix}
     0&C\\
     D&Y_2
    \end{pmatrix} $, where $Y_2 = UJU^{-1}$, for any $U \in GL_{n-r}(\mathbb{K})$, and for $Z = U^{-1}C,\ W\ =\ DU$, $ZJ=0,\ JW = 0,\ ZW = 0,\ WZ = J$.
    \item[(c)] $X= \begin{pmatrix}
      Y_1&C\\
     D&0
    \end{pmatrix}$,  where $Y_1 = VJV^{-1}$, for any $V \in GL_r(\mathbb{K})$, and for $Z\ =\ V^{-1}C,\ W\ =\ DV$, $JZ\ =\ 0,\ WJ\ =\ 0,\ ZW\ =\ J,\ WZ\ =\ 0$.
    \item[(d)]  $X = \begin{pmatrix}
     Y_1&C\\
     D&Y_2
    \end{pmatrix} $, where $Y_1 = U_1JU_1^{-1}, Y_2 = U_2JU_2^{-1}$ for any $(U_1,U_2) \in GL_r(\mathbb{K}) \times GL_{n-r}(\mathbb{K})$, and for $Z_1\ =\ U_1^{-1}C,\ Z_2\ =\ CU_2,\ W_1\ =\ DU_1,\ W_2\ =\ U_2^{-1} D$, $JZ_1\ =\ 0, \ Z_2J\ =\ 0,\ W_1J\ =\ 0,\ JW_2\ =\ 0,\ Z_1W_1\ =\ J,\ Z_2W_2\ =\ J$.
\end{itemize}
\end{theorem}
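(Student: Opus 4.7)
The plan is to exploit the block structure forced by $A$ and $B$. In the given canonical form $A = \mathrm{diag}(I_r,\, 0_{n-r})$ and $B = \mathrm{diag}(0_r,\, I_{n-r})$, any candidate $X$ decomposes compatibly as $X = \begin{pmatrix} Y_1 & C \\ D & Y_2 \end{pmatrix}$ with blocks of sizes $r$ and $n-r$. First I would compute the four products $AXA$, $BXB$, $XAX$, $XBX$ directly in this block form: pre- and post-multiplying by $A$ or $B$ merely selects the top-left or bottom-right diagonal block, while $XAX$ and $XBX$ are $2 \times 2$ block matrices with entries $(Y_1^2,\, Y_1 C,\, D Y_1,\, DC)$ and $(CD,\, C Y_2,\, Y_2 D,\, Y_2^2)$ respectively.

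Equating $XAX = BXB$ and $XBX = AXA$ block by block then extracts the complete system on $(Y_1, Y_2, C, D)$: $Y_1 = CD$, $Y_2 = DC$, $Y_1^2 = Y_2^2 = 0$, and the annihilations $Y_1 C = C Y_2 = 0$, $D Y_1 = Y_2 D = 0$. After substituting $Y_1 = CD$ and $Y_2 = DC$, all of these collapse to the two essential identities $CDC = 0$ and $DCD = 0$; every other identity follows by associativity. The heart of the argument is then a case split on whether $Y_1$ and $Y_2$ vanish. When both vanish we obtain form (i) directly. When exactly one of them, say $Y_2$, is nonzero, its index-two nilpotency lets me conjugate it to the canonical matrix $J$ by some $U \in GL_{n-r}(\mathbb{K})$, and transporting the remaining annihilations into the coordinates $Z = U^{-1}C$ and $W = DU$ yields precisely the constraints $ZJ = JW = ZW = 0$, $WZ = J$ of form (ii); case (iii) is symmetric. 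When both $Y_1, Y_2$ are nonzero, I would run the normalization independently on each side with $U_1 \in GL_r(\mathbb{K})$ and $U_2 \in GL_{n-r}(\mathbb{K})$, obtaining form (iv) with the identities $JZ_1 = Z_2 J = W_1 J = J W_2 = 0$ and $Z_1 W_1 = Z_2 W_2 = J$.

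With the forward direction in hand, the converse is essentially free: substituting any $X$ of one of the four shapes back into the block expansions and using the prescribed identities on $Z$ and $W$ verifies both original equations. The main obstacle I anticipate lies in the parameterization step in the nonzero-$Y_i$ cases. A priori, $CD$ and $DC$ can be any nilpotent matrices of square zero, so writing them in the exact shape $UJU^{-1}$ with the specific rank-one $J$ fixed in the preliminaries requires either a rank analysis showing $Y_i$ must itself have rank one, or else a broader reading of $J$ as an arbitrary index-two nilpotent normal form; this subtlety determines whether the list of four forms is exhaustive or covers only the rank-one branch of the solution variety.
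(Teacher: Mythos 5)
Your proposal follows essentially the same route as the paper's proof: the same block computation of $AXA$, $XBX$, $BXB$, $XAX$, the same reduced system ($Y_1^2=Y_2^2=0$, $Y_1C=CY_2=DY_1=Y_2D=0$, $CD=Y_1$, $DC=Y_2$), and the same four-way case split on the vanishing of $Y_1$ and $Y_2$, ending with conjugation of a nonzero square-zero block to $J$. Your observation that after substituting $Y_1=CD$, $Y_2=DC$ the entire system collapses to $CDC=0$ and $DCD=0$ is a clean tightening that the paper does not make.

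The subtlety you flag at the end is not a pedantic worry: it is a genuine gap, and it is present in the paper's own proof, which simply asserts that a nonzero $Y_2$ with $Y_2^2=0$ ``must be similar to $J$''. Since the paper's $J$ has rank one, this can fail as soon as the relevant block size is at least $4$: a square-zero matrix of rank $k$ is similar to a direct sum of $k$ copies of the $2\times 2$ nilpotent Jordan block padded with zeros, and $k\geq 2$ is possible. Moreover, such solutions really occur, so no rank analysis can rescue the rank-one claim. For instance, with $r=n-r=4$ (so $n=8$), writing all four blocks in terms of $2\times 2$ sub-blocks, take
\[ Y_1=\begin{pmatrix} 0 & I_2\\ 0 & 0\end{pmatrix},\qquad C=\begin{pmatrix} I_2 & 0\\ 0 & 0\end{pmatrix},\qquad D=\begin{pmatrix} 0 & I_2\\ 0 & 0\end{pmatrix},\qquad Y_2=0. \]
Then $CD=Y_1$, $DC=0=Y_2$, and $CDC=DCD=0$, so $X=\begin{pmatrix} Y_1 & C\\ D & Y_2\end{pmatrix}$ solves (\ref{main_equation}); but $\operatorname{rank}(Y_1)=2$, so $Y_1$ is not similar to the rank-one $J$, and $X$ matches none of the four listed forms. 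Hence the correct repair is your second option: read $J$ as an arbitrary square-zero canonical form $\mathrm{diag}(J_2,\dots,J_2,0,\dots,0)$, under which your argument (and the paper's) goes through verbatim. One last small point: in case (ii) you copied the statement's substitutions $Z=U^{-1}C$, $W=DU$, which are dimensionally inconsistent when $r\neq n-r$ ($U$ is $(n-r)\times(n-r)$ while $C$ is $r\times(n-r)$); the transport that actually works, and that the paper's proof uses, is $Z=CU$, $W=U^{-1}D$.
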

\begin{proof}
When $rank(A)= nullity(B)$; i.e $r=n-m$, we have,
\[  A = \begin{pmatrix}
I_r&0\\
0&0
\end{pmatrix}
\text{ and }  B= \begin{pmatrix}
0&0\\
0& I_{n-r}
\end{pmatrix}.\]
Also, let us partition $X$ corresponding to the block forms of $A$ and $B$. Take, $X = \begin{pmatrix}
 Y_1 & C\\
 D & Y_2
\end{pmatrix}$ , where $Y_1$ is $r \times r$, $Y_2$ is $(n-r) \times (n-r)$, C is $r \times n-r$, and D is $r \times n-r$ matrices.
Then, \eqref{main_equation} implies, 
\[ \begin{pmatrix}
I_r&0\\
0&0
\end{pmatrix}  \begin{pmatrix}
 Y_1 & C\\
 D & Y_2
\end{pmatrix}  
 \begin{pmatrix}
I_r&0\\
0&0
\end{pmatrix} = 
\begin{pmatrix}
 Y_1 & C\\
 D & Y_2
\end{pmatrix} 
\begin{pmatrix}
0&0\\
0& I_{n-r}
\end{pmatrix}
\begin{pmatrix}
 Y_1 & C\\
 D & Y_2
\end{pmatrix} 
\]
and 
\[ \begin{pmatrix}
0&0\\
0& I_{n-r}
\end{pmatrix}  \begin{pmatrix}
 Y_1 & C\\
 D & Y_2
\end{pmatrix}  
 \begin{pmatrix}
0&0\\
0& I_{n-r}
\end{pmatrix} = 
\begin{pmatrix}
 Y_1 & C\\
 D & Y_2
\end{pmatrix} 
\begin{pmatrix}
I_r&0\\
0&0
\end{pmatrix}
\begin{pmatrix}
 Y_1 & C\\
 D & Y_2
\end{pmatrix}.  \]
We obtain the following system of equations,
\begin{equation}\label{eq15}
    \begin{aligned}
        Y_1^2=0, Y_2^2 = 0, Y_1C =0, CY_2=0, DY_1=0, Y_2D=0, CD=Y_1, DC=Y_2.
    \end{aligned}
\end{equation}
This gives us, $Y_1$ and $Y_2$ are nilpotent matrices with nipotency $\leq 2$. With the choice of $Y_1$ and $Y_2$, we can determine the remaining blocks $C,D$.\\
\textbf{Case 1:} Let both $Y_1$ and $Y_2$ be zero matrices, then equations in (\ref{eq15}) reduces to finding $C$ and $D$ such that $CD=0 ,DC =0$. Now, if $Y_1 = 0$ and $Y_2$ is non-zero, then $Y_2$ must be similar to J. Say, $Y_2 = UJU^{-1}$, for some $U \in GL_{n-r}(\mathbb{K})$. Then by taking $Z = CU , W = U^{-1}D$, we can rewrite equations in (\ref{eq15}) as the following.
\begin{equation}\label{eq16}
    \begin{aligned}
        ZJ\ =\ 0,\ JW\ =\ 0,\ ZW\ =\ 0,\ WZ\ =\ J.
    \end{aligned}
\end{equation}
Note that, here, the first column of $Z$ and second row of $W$ are zero vectors, and rows of $Z$ are orthogonal to column vectors of $W$.\\
\textbf{Case 2:} Similarly, if $Y_2\ =\ 0$ and $Y_1$ is non-zero, then $Y_1$ must be similar to $J$. Then for some $V \in GL_r(\mathbb{K})$, $Y_1\ =\ VJV^{-1}$. Then by taking $Z\ =\ V^{-1}C,\ W\ =\ DV$, we can rewrite (\ref{eq15}) as follows. \[JZ\ =\ 0,\ WJ\ =\ 0,\ ZW\ =\ J,\ WZ\ =\ 0.\]
\textbf{Case 3:} Now, let both $Y_1$ and $Y_2$ be non-zero. Then, $Y_1 =  U_1JU_1^{-1} $ and $Y_2 = U_2JU_2^{-1} $, for some $(U_1, U_2) \in GL_r(\mathbb{K}) \times GL_{n-r}(\mathbb{K})$. Then, by taking $Z_1 = U_1^{-1}C, Z_2 = CU_2, W_1 = DU_1, W_2 = U_2^{-1} D$, we can rewrite (\ref{eq15}) as the following.
\begin{equation}\label{eq.14}
    \begin{aligned}
        JZ_1 = 0,\  Z_2J = 0,\ W_1J = 0,\ JW_2 = 0,\ Z_1W_1 = J,\ W_2Z_2 = J. 
    \end{aligned}
\end{equation}
Here, the first columns of $Z_2$ and $W_1$ are zero, also the second rows of $Z_1$ and $W_2$ are zero.
\\
\end{proof}

\begin{Example}
Let $A\ = \frac{1}{31}\begin{pmatrix}
 -39&-20 &-50\\
 14&35&10\\
 49&14&66
\end{pmatrix}$, and\\
$B\ = \frac{1}{31}\begin{pmatrix}
 70&20&50\\
 -14&-4&-10\\
 -49&-14&-35
\end{pmatrix}.$ Then clearly, both $A$ and $B$ are idempotent and orthogonal complements, and are simultaneously diagonalizable as follows. For $U = \begin{pmatrix}
 3&1&4\\
 2&3&2\\
 7&2&5
\end{pmatrix} $,
\begin{align*}
UAU^{-1} &=  \begin{pmatrix}
 3&1&4\\
 2&3&2\\
 7&2&5
\end{pmatrix}
\frac{1}{31}\begin{pmatrix}
 -39&-20 &-50\\
 14&35&10\\
 49&14&66
\end{pmatrix}\frac{1}{31}
\begin{pmatrix}
 -11&-3&10\\
 -4&13&-2\\
 17&-1&-7
\end{pmatrix}\\
 &= \begin{pmatrix}
 1&0&0\\
 0&1&0\\
 0&0&0
\end{pmatrix}\ = J_A 
\end{align*}
Similarly, for the same $U$, we can see, $J_B\ = UBU^{-1}= \begin{pmatrix}
 0&0&0\\
 0&0&0\\
 0&0&1
\end{pmatrix}$.\\
Then, a solution $X$ for \eqref{main_equation}, can be found using the equations, $YJ_AY = J_BYJ_B$ and $YJ_BY = J_AYJ_A$, where $Y\ =\ UXU^{-1}$.\\
Now, take $Y\ =\ \begin{pmatrix}
 a&b&c\\
 d&e&f\\
 g&h&i
\end{pmatrix}$. Then, $J_AYJ_A\ =\ YJ_BY$, and $J_BYJ_B\ =\ YJ_AY$ implies the following equations. The solution $Y$ is constituted by the zeros of the following system.
\begin{equation}
    \begin{aligned}
    \begin{matrix}
     f_1= -cg+a& f_2= -ch+b&  f_3= ci \\
 f_4= -fg+d&  f_5 = -fh+e& f_6= fi \\
f_7= gi&  f_8= hi&  f_9= i^2 \\
f_{10}=a^2+bd & f_{11}= ab+be & f_{12}= ac+bf   \\
f_{13}= ad+de & f_{14}=bd+e^2 & f_{15}= cd+ef   \\
f_{16}= ag+dh& f_{17}= bg+eh & f_{18}= -cg-fh+i
    \end{matrix}
    \end{aligned}
\end{equation}
Now, if we calculate the Gr$\ddot{o}$bner basis for ideal $\langle f_1,f_2,...,f_{18} \rangle$ in $\mathbb{C}[a,b,c,...,i]$ with respect to the lexicographic order for $a>b>c>d>...>i$, we get, $\{  a+e-i,\ i^2,\ hi,\ gi,\ fi,\ ei,\ di,\ ci,\ bi,\ fh-e,\ ch-b,\ fg-d,\ eg-dh,\ cg+e-i,\ bg+eh,\ ce-bf,\ cd+ef,\ bd+e^2\}.$\\
From the Gr$\ddot{o}$bner basis, we have $i =\ 0$, gives the block $Y_2= 0$. From remaining equations, we can see that $Y_1^2\ =\ \begin{pmatrix}
 a&b\\
 d&e
\end{pmatrix}^2 = 0$. Also, \[CD\ =\ \begin{pmatrix}
 c\\
 f
\end{pmatrix} \begin{pmatrix}
 g&h
\end{pmatrix}\ = \begin{pmatrix}
 cg&ch\\
 fg&fh
\end{pmatrix}\ =\ \begin{pmatrix}
 a&b\\
 d&e
\end{pmatrix}\ =\ Y_1. \]
Similarly, we can see that $DC=0$, $Y_1C\ = 0$, $DY_1\ = 0$. So, the solution set is precisely, $X\ = U^{-1} \begin{pmatrix}
 Y_1&C\\
 D&0
\end{pmatrix}U$, with $Y_1^2\ =\ 0$, $CD\ =\ Y_1$, $DC\ =\ 0$, $Y_1C\ = 0$, $DY_1\ = 0$. 
\end{Example}

\begin{theorem}
Let $A,B$ be two idempotent matrices, which are orthogonal complements and are in their Jordan-canonical forms as follows, with $rank(A) < nullity(B)$.
\[  A = \begin{pmatrix}
I_r&0\\
0&0
\end{pmatrix}
\text{ and }  B= \begin{pmatrix}
0&0\\
0& B'
\end{pmatrix}, \]
where $B' = \begin{pmatrix}
 I_s&0\\
 0&0
\end{pmatrix}$, $(n-r)\times (n-r)$ matrix, and $s = rank(B)$, such that $1 \leq s < n-r$. Then any solution to \eqref{main_equation} has one of the following forms.\\
\begin{itemize}
    \item[(a)] $X= \begin{pmatrix}
     0&C\\
     D&Y_2
    \end{pmatrix}$, such that $DC=B'Y_2B',~ CB'D=Y_1,~CB'Y_2 =0,~ Y_2B'D = 0,~ Y_2B'Y_2 = 0.$
    \item[(b)] $X = \begin{pmatrix}
     Y_1&C\\
     D&Y_2
    \end{pmatrix} $, where $Y_1 = UJU^{-1}$, for any $U \in GL_r(\mathbb{K})$, and for $Z\ =\ U^{-1}C,\ W\ =\ DU$, $ JZ=0,~ WJ=0,~ DC = B'Y_2B',~ ZB'W=J,~CB'Y_2 =0,~Y_2B'D=0,~Y_2B'Y_2 =0.$
\end{itemize}
\end{theorem}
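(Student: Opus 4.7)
The approach would be a direct block-matrix computation, mirroring the proof of the previous theorem but adapted to the fact that the identity blocks of $A$ and $B$ now live in non-complementary positions. First, I would partition every matrix according to the $r+(n-r)$ splitting induced by $A$; in this partition
\[
A=\begin{pmatrix} I_r & 0\\ 0 & 0\end{pmatrix},\qquad B=\begin{pmatrix} 0 & 0\\ 0 & B'\end{pmatrix},
\]
where $B'$ is the $(n-r)\times(n-r)$ idempotent of rank $m$ obtained by embedding $I_m$ in the lower-right corner. Writing $X=\begin{pmatrix} Y_1 & C\\ D & Y_2\end{pmatrix}$ conformally, the four block products $AXA$, $BXB$, $XAX$, $XBX$ can all be expanded explicitly in roughly one page of calculation.

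Carrying out the multiplications, the equation $AXA=XBX$ yields the four relations $Y_1=CB'D$, $CB'Y_2=0$, $Y_2B'D=0$, $Y_2B'Y_2=0$, while $BXB=XAX$ yields $Y_1^2=0$, $Y_1C=0$, $DY_1=0$, $B'Y_2B'=DC$. These eight relations constitute the master system to be solved.

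I would then split into cases on $Y_1$. If $Y_1=0$, the three relations $Y_1^2=0$, $Y_1C=0$, $DY_1=0$ are automatic, and the five surviving constraints are exactly those listed in case (i). If $Y_1\neq 0$, then $Y_1$ is a nonzero $r\times r$ matrix satisfying $Y_1^2=0$, so by Jordan normal form $Y_1=UJU^{-1}$ for some $U\in GL_r(\mathbb{K})$, where $J$ denotes the Jordan form under consideration (a direct sum of $2\times 2$ nilpotent Jordan blocks and zeros, in the same spirit as the preceding theorem). Substituting $Z=U^{-1}C$ and $W=DU$ transforms $Y_1C=0$, $DY_1=0$, $CB'D=Y_1$ into the clean forms $JZ=0$, $WJ=0$, $ZB'W=J$, while the remaining constraints $CB'Y_2=0$, $Y_2B'D=0$, $Y_2B'Y_2=0$, $B'Y_2B'=DC$ pass through unchanged, giving case (ii).

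The main obstacle is really a notational one: the symbol $J$ must be interpreted as the Jordan canonical form of the index-$\leq 2$ nilpotent $Y_1$, not as a single fixed matrix of the explicit shape displayed in the preliminaries, since for $r>2$ there are nilpotents of index $2$ with rank greater than $1$. Once this convention is adopted (matching its use throughout the section), the rest of the argument is bookkeeping. The only structural difference with the previous theorem is that here $B'\neq I$, so the relations $B'Y_2B'=DC$ and $Y_2B'Y_2=0$ do \emph{not} force $Y_2$ to be nilpotent of index $\leq 2$; this is exactly the reason only two cases appear in the statement rather than the four cases of the equal-rank situation.
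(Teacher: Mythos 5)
Your argument is correct and is essentially the paper's own proof: the same conformal block partition of $X$, the same eight relations obtained by expanding the two equations (the paper's system \eqref{eq18}), the same dichotomy on $Y_1$, and the same change of variables $Z=U^{-1}C$, $W=DU$ producing case (ii).

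The one place you go beyond the paper is your caveat about $J$, and it is worth stressing that this is a genuine correction rather than a notational quibble. The paper fixes $J$ once and for all as the matrix whose only nonzero entry is a $1$ in position $(1,2)$, and its proof asserts that a nonzero $Y_1$ with $Y_1^2=0$ must be similar to this rank-one $J$. That is false once $r\ge 4$ and $s=\operatorname{rank}(B')\ge 2$: the only constraint on the rank of $Y_1=CB'D$ is $\operatorname{rank}(Y_1)\le s$, so index-two nilpotents of higher rank can and do occur. For example, with $r=4$, $n-r=3$, $s=2$ (so $\operatorname{rank}(A)=4<5=\operatorname{nullity}(B)$), take
\[
Y_2=0,\qquad
C=\begin{pmatrix}1&0&0\\0&1&0\\0&0&0\\0&0&0\end{pmatrix},\qquad
D=\begin{pmatrix}0&0&1&0\\0&0&0&1\\0&0&0&0\end{pmatrix}.
\]
All eight relations are satisfied, but $Y_1=CB'D$ is a rank-two nilpotent of index two (a direct sum of two $2\times 2$ Jordan blocks), not similar to the paper's $J$; such a solution is covered by neither case (i) nor case (ii) as literally stated. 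Under your convention, where $J$ stands for an arbitrary Jordan canonical form of a nilpotent of index at most $2$, both the statement and your proof are correct, and that is the reading the theorem needs.
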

\begin{proof}
 Let us take X as previously, i.e., $X =  \begin{pmatrix}
 Y_1 & C\\
 D & Y_2
\end{pmatrix}$. Then \eqref{main_equation} implies,
\[ \begin{pmatrix}
I_r&0\\
0&0
\end{pmatrix}  \begin{pmatrix}
 Y_1 & C\\
 D & Y_2
\end{pmatrix}  
 \begin{pmatrix}
I_r&0\\
0&0
\end{pmatrix} = 
\begin{pmatrix}
 Y_1 & C\\
 D & Y_2
\end{pmatrix} 
\begin{pmatrix}
0&0\\
0& B'
\end{pmatrix}
\begin{pmatrix}
 Y_1 & C\\
 D & Y_2
\end{pmatrix} 
\]
and 
\[ \begin{pmatrix}
0&0\\
0& B'
\end{pmatrix}  \begin{pmatrix}
 Y_1 & C\\
 D & Y_2
\end{pmatrix}  
 \begin{pmatrix}
0&0\\
0& B'
\end{pmatrix} = 
\begin{pmatrix}
 Y_1 & C\\
 D & Y_2
\end{pmatrix} 
\begin{pmatrix}
I_r&0\\
0&0
\end{pmatrix}
\begin{pmatrix}
 Y_1 & C\\
 D & Y_2
\end{pmatrix}.  \]
This gives rise to the following equations.
\begin{equation}\label{eq18}
\begin{aligned}
    Y_1^2 = 0,~ Y_1C=0,~ DY_1=0,~ DC=B'Y_2B',\\
    CB'D=Y_1,~ CB'Y_2 =0,~ Y_2B'D = 0,~ Y_2B'Y_2 = 0.
    \end{aligned}
\end{equation}
Further, by partitioning $C,D,$ and $Y_2$ into blocks with dimensions corresponding to $B'$, we have
\[  C = \begin{pmatrix}
 C_1&C_2\\
 C_3&C_4
\end{pmatrix}, D= \begin{pmatrix}
 D_1&D_2\\
 D_3&D_4
\end{pmatrix}, Y_2 = \begin{pmatrix}
 X_1&X_2\\
 X_3&X_4
\end{pmatrix}.\]
Then, we have the following implications.
\begin{equation}\label{eq17}
    \begin{aligned}
    Y_2B'Y_2 = 0 \Rightarrow ~~ X_1^2 =0,~ X_1X_2 = 0,~ X_3X_1 =0,~ X_3X_2=0\\
    CB'Y_2=0 \Rightarrow ~~ C_1X_1 =0,~ C_1X_2 =0 ,~ C_3X_1=0,~C_3X_2 =0\\
    Y_2B'D =0 \Rightarrow ~~ X_1D_1=0,~X_1D_2 =0,~ X_3D_1=0,~X_3D_2=0\\
    DC=B'Y_2B' \Rightarrow ~~ D_1C_1+D_2C_3=X_1,~ D_1C_2+D_2C_4=0,\\
    D_3C_1+D_4C_3=0,~ D_3C_2+D_4C_4=0
    \end{aligned}
\end{equation}
Now, from  \eqref{eq18}, it is clear that $Y_1$ is a nilpotent matrix with $nilpotency \leq 2$. Then, $Y_1$ is either 0 or similar to J.
\textbf{Case 1:} When $Y_1 = 0$,
\[ CB'D = Y_1 =0 \Rightarrow C_1D_1=0,C_1D_2 =0, C_3D_1=0,C_3D_2=0. \]
\textbf{Case 2:} If $Y_1 \neq 0$, then for some $ U \in GL_r(\mathbb{K})$, $Y_1 = UJU^{-1}$. Then by taking $U^{-1}C=Z$ and $DU = W$, we can rewrite (\ref{eq18}) as following,
\begin{equation}\label{eq20}
   JZ=0, WJ=0, DC = B'Y_2B', ZB'W=J, CB'Y_2 =0, Y_2B'D=0, Y_2B'Y_2 =0.
\end{equation}
Further, from the first equation of \eqref{eq17}, it is clear that $X_1$ is nilpotent with $nilpotency \leq 2$. Then $X_1$ is 0 or similar to J. Now, if $X_1 =0$, then we can rewrite $CB'Y_2=0$ as, $\begin{pmatrix}
 0&C_1X_2\\
 0&C_3X_3
\end{pmatrix}\ =\ Y_1$. Also, $Y_2B'D = 0$ can be rewritten, $\begin{pmatrix}
 0&0\\
 X_3D_1&X_3D_2
\end{pmatrix}\ =\ \begin{pmatrix}
 0&0\\
 0&0
\end{pmatrix}$.\\
Also, when $s \geq 2$, and if $Y_1 \neq 0$, then  by taking $Z= \begin{pmatrix}
 Z_1&Z_2\\
 Z_3&Z_4
\end{pmatrix}$ and $W= \begin{pmatrix}
 W_1&W_2\\
 W_3&W_4
\end{pmatrix}$, 
$ZB'W = J$ implies, $Z_1W_2 =0, Z_3W_1 =0, Z_3W_2 =0$, and $Z_1W_1 = J_s$, where $J_s$ is the left-top corner $s \times s$ block of J.
\end{proof}
\begin{Example}
Let $A\ =\ \begin{pmatrix}
 29/2&24&12&33/2\\
 -1/2&-2&-1&-7/2\\
 -43/2&-34&-17&-41/2\\
 9/2&8&4&13/2
\end{pmatrix}$, and \\
$B\ =\ \begin{pmatrix}
 -12&-15&-9&-6\\
 0&0&0&0\\
 20&25&15&10\\
 -4&-5& 3&-2
\end{pmatrix}$.
Then, both $A$ and $B$ are idempotent and orthogonal complements. We can simultaneously diagonalize them using $U =\begin{pmatrix}
 1&2&1&2\\
 3&4&2&1\\
 4&5&3&2\\
 1&6&2&7
\end{pmatrix} $, as follows.
\begin{align*}
   UAU^{-1} &= \begin{pmatrix}
 1&2&1&2\\
 3&4&2&1\\
 4&5&3&2\\
 1&6&2&7
\end{pmatrix}
\begin{pmatrix}
 29/2&24&12&33/2\\
 -1/2&-2&-1&-7/2\\
 -43/2&-34&-17&-41/2\\
 9/2&8&4&13/2
\end{pmatrix}
\begin{pmatrix}
 7&5/2&-3&-3/2&\\
 -2&1/2&0&1/2\\
 -8&-9/2&5&3/2\\
 3&1/2&-1&-1/2
\end{pmatrix}\\
&= \begin{pmatrix}
 1&0&0&0\\
 0&1&0&0\\
 0&0&0&0\\
  0&0&0&0
\end{pmatrix} = J_A.
\end{align*}
Similarly, for the same $U$, we have $J_B\ =\ UBU^{-1}\ = \begin{pmatrix}
  0&0&0&0\\
  0&0&0&0\\
  0&0&1&0\\
  0&0&0&0
\end{pmatrix}.$\\
Then, a solution $X$ for \eqref{main_equation}, can be found using the equations, $YJ_AY = J_BYJ_B$ and $YJ_BY = J_AYJ_A$, where $Y\ =\ UXU^{-1}$.\\
Take $Y\ =\ \begin{pmatrix}
 Y_1&C\\
 D&Y_2
\end{pmatrix}\ =\ \begin{pmatrix}
 a&b&c&d\\
 e&f&g&h\\
 i&j&k&l\\
 m&n&p&q
\end{pmatrix}$.\\
Then the Gr$\ddot{o}$bner basis generated corresponding to the equations from the system $J_AYJ_A\ =\ YJ_BY$ and $J_BYJ_B\ =\ YJ_AY$ is:\\
$\{ a+f-k,\ lp,\ kp,\ jp,\ ip,\ fp,\ ep,\ bp,\ fm-km-en,\ dm+hn,\ cm+gn,\ bm+fn,\ kl,\ gl,\ fl,\ el,\ cl,\ bl,\ k^2,\ jk,\\ ik,\ gk,\ fk,\ ek,\ ck,\ bk,\ gj-f,\ cj-b,\ gi-e,\ fi-ej,\ di+hj,\ ci+f-k,\ bi+fj,\ df-bh-dk,\ cf-bg,\ de+fh,\ ce+fg,\ be+f^2,\ hkn,\ dkn,\ dgn-chn,\ hkm,\ hjm-hin,\ ejm-ein,\ bdg-bch \}$\\

From the above Gr$\ddot{o}$bner basis, we get $k=0$, which gives $a\ = -f$. Then, we have, 
\[Y_2\ =\ \begin{pmatrix}
 X_1&X_2\\
 X_3&X_4
\end{pmatrix} = \begin{pmatrix}
 k&l\\
 p&q
\end{pmatrix} = \begin{pmatrix}
 0&l\\
 p&q
\end{pmatrix}, \text{ with $lp =0$}.\]
Also, $Y_1^2\ = \begin{pmatrix}
 a&b\\
 e&f
\end{pmatrix}^2\ = 0.$ Similarly, by considering $C\ = \begin{pmatrix}
 C_1&C_2\\
 C_3&C_4
\end{pmatrix}\ = \begin{pmatrix}
 c&d\\
 g&h
\end{pmatrix}$ and $D\ = \begin{pmatrix}
 D_1&D_2\\
 D_3&D_4
\end{pmatrix}\ = \begin{pmatrix}
 i&j\\
 m&n
\end{pmatrix}$, we can see,
\[CB'Y_2\ = \begin{pmatrix}
 0&C_1X_2\\
 0&C_3X_2
\end{pmatrix}\ = \begin{pmatrix}
 0&cl\\
 0&gl
\end{pmatrix}.\]
In same passion we have, $Y_2B'D = 0,\ Y_2B'Y_2=0,\ CB'D= Y_1,\ DC = B'Y_2B',\  Y_1 =0,\ Y_1C=0$. Now, if $C$ is chosen in a way that it is non-singular, then $b = n =0$. Also, if $D$ is non-singular, then $e=h=0$. These equations together give the complete solutions for the system of Yang-Baxter-like matrix equations for the given $A$ and $B$. 
\end{Example}
\section{Solutions when coefficient matrices are of size 2}
In this section, we discuss explicit solutions of the system of equations in \eqref{main_equation} when $A$ and $B$ are $2 \times 2$ matrices given in the Jordan-canonical form. We find solutions using the Gr$\ddot{o}$bner basis techniques.
\subsection{When A and B are diagonalizable}
Let both $A$ and $B$ be diagonalizable and are in Jordan form. Say,
\[ A = \begin{pmatrix}
 a & 0\\
 0 & b
\end{pmatrix}, \text{  } B = \begin{pmatrix}
 c & 0\\
 0 & d
\end{pmatrix}, \text{ and } X = \begin{pmatrix}
 x_1 & x_2\\
 x_3 & x_4
\end{pmatrix}.
\]
Then, solutions to the equations $AXA=XBX$ and $BXB = XAX$, equivalently can be read as zeros of a system of 8 polynomials, $f_1,f_2,...,f_8$.
\begin{proposition}
When $A,B$ assume the above mentioned form, and $a,b,c,d \neq 0$, then any non-trivial solution of \eqref{main_equation} have the following form:\\
\begin{itemize}
    \item[i)] If $x_1=0$, then
$X = \begin{pmatrix}
0& \alpha\\
0& \frac{b^2}{d}
\end{pmatrix}$ or $\begin{pmatrix}
0& 0\\
\alpha& \frac{b^2}{d}
\end{pmatrix}$, where $\alpha \in \mathbb{K}, a=b \text{ and } b^3=d^3$.
\item[ii)] If $x_2 = 0$, then $X = \begin{pmatrix}
\frac{a^2}{c}& 0\\
\alpha& \frac{b^2}{d}
\end{pmatrix}$ or $\begin{pmatrix}
\frac{a^2}{c}& 0\\
0& \frac{b^2}{d}
\end{pmatrix}$, where $ a^3=c^3, b^3=d^3, ab= a^2+b^2, \text{ and } cd = c^2+d^2$.
\item[iii)] If $x_3 = 0$, then $X = \begin{pmatrix}
\frac{a^2}{c}& \alpha\\
0& \frac{b^2}{d}
\end{pmatrix}$ or $\begin{pmatrix}
\frac{a^2}{c}& 0\\
0& \frac{b^2}{d}
\end{pmatrix}$, where $ a^3=c^3, b^3=d^3, ab= a^2+b^2, \text{ and } cd = c^2+d^2$.
\item[iv)] If $x_4 = 0$, then $X = \begin{pmatrix}
\frac{a^2}{c}& \alpha\\
0&0
\end{pmatrix}$ or $\begin{pmatrix}
\frac{a^2}{c}& 0\\
\alpha& 0
\end{pmatrix}$, where $\alpha \in \mathbb{K}, a^2b=c^2d$.
\item[v)] Any other non-singular solution $X$ of \eqref{main_equation}, has the form $X^2\ =\ U \begin{pmatrix}
 ac&0\\
 0&bd
\end{pmatrix} U^{-1}$, for some $U \in GL_2(\mathbb{K})$.
\end{itemize}
\end{proposition}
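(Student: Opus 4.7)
The plan is to expand both sides of $AXA = XBX$ and $BXB = XAX$ entrywise, exploiting the diagonal form of $A$ and $B$. Observe that $AXA$ has a very clean form: its $(i,j)$-entry is simply a scalar times $x_{ij}$, giving the four entries $a^2 x_1$, $ab\, x_2$, $ab\, x_3$, $b^2 x_4$, while $XBX$ produces the more interesting quadratic expressions that mix the off-diagonal entries. The $(1,2)$ and $(2,1)$ equations factor as $x_2(ab - c x_1 - d x_4) = 0$ and $x_3(ab - c x_1 - d x_4) = 0$ (together with the symmetric pair in which $(a,b)$ and $(c,d)$ are swapped), and the diagonal entries yield $a^2 x_1 = c x_1^2 + d x_2 x_3$ and $b^2 x_4 = c x_2 x_3 + d x_4^2$, along with their swapped counterparts. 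This produces a system of $8$ polynomial equations whose zero locus is precisely the solution set.

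For cases (i)–(iv) I would substitute the respective $x_i = 0$ into the reduced system and solve directly. In case (i), setting $x_1 = 0$ immediately forces $x_2 x_3 = 0$ from the $(1,1)$-entries, and the two $(2,2)$-entry equations $b^2 x_4 = d x_4^2$ and $d^2 x_4 = b x_4^2$ give $x_4 = b^2/d = d^2/b$, whence $b^3 = d^3$. The off-diagonal factor $ab = d x_4$ (when $x_2 \neq 0$ or $x_3 \neq 0$) then yields $a = b$. The two subcases $x_2 = 0$ with $x_3$ free and $x_3 = 0$ with $x_2$ free recover the two matrix shapes. Cases (ii), (iii), (iv) proceed by exactly parallel substitutions: in each one, one row or column of $X$ is killed, the off-diagonal equations become trivial or one-dimensional, and the diagonal equations determine the two corner entries in terms of $a, b, c, d$, leaving one free slot $\alpha$ and producing constraints such as $a^3 = c^3$, $b^3 = d^3$, $ab = a^2 + b^2$, and $cd = c^2 + d^2$.

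Case (v), the non-singular case, is handled by directly invoking Theorem~\ref{theorem-x2}: any invertible solution $X$ satisfies $X^2 \sim BA$. Since $BA = \mathrm{diag}(ac, bd)$, the conclusion $X^2 = U \, \mathrm{diag}(ac, bd)\, U^{-1}$ for some $U \in GL_2(\mathbb{K})$ is immediate, and no further entrywise calculation is required.

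The main obstacle is bookkeeping. There are eight polynomial equations and four choice-of-vanishing cases, and within each case one must track the constraints coming from \emph{both} matrix equations so as not to omit a hidden relation among $a, b, c, d$. A clean safeguard would be to compute a Gröbner basis of the $8$-polynomial ideal in $\mathbb{K}[x_1, x_2, x_3, x_4]$ (with $a, b, c, d$ kept as parameters) using a lexicographic order, following the Sage-based methodology already employed in the paper; this gives a systematic check that the four vanishing cases together with the non-singular case exhaust the variety.
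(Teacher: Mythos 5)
Your proposal matches the paper's proof essentially step for step: the same eight polynomial equations $f_1,\dots,f_8$, the same case-by-case substitution $x_i=0$ for (i)--(iv), and the same appeal to Theorem~\ref{theorem-x2} for case (v). The only difference is that the paper resolves each vanishing case by a SageMath Gr\"obner-basis computation (omitted there for size) rather than by hand elimination, a distinction without mathematical substance --- and you propose that same Gr\"obner computation as your safeguard anyway.
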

\begin{proof}
Equations in \eqref{main_equation} give the following polynomials, and a solution to it is a solution for this system of polynomials.
\begin{equation}\label{eq11}
\begin{aligned}
    f_1 &= a^2x_1-cx_1^2-dx_2x_3 &&
    f_2 &=&~ abx_2- cx_1x_2-dx_2x_4\\
    f_3 &= abx_3-cx_1x_3-dx_3x_4 &&
    f_4 &=&~ b^2x_4-cx_2x_3-dx_4^2\\
    f_5 &= c^2x_1-ax_1^2-bx_2x_3 &&
    f_6 &=&~ cdx_2-ax_1x_2-bx_2x_4\\
    f_7 &= cdx_3-ax_1x_3-bx_3x_4 &&
    f_8 &=&~ d^2x_4 -ax_2x_3-bx_4^2
\end{aligned}
\end{equation}
Now we can find a Gr$\ddot{o}$bner basis for the ideal  $\langle f_1,f_2,...,f_8 \rangle$ in $\mathbb{C}[a,b,c,d,x_1,...,x_4]$ with respect to the lexicographic order for $a>b>c>d>x_1>...>x_4$. But Gr$\ddot{o}$bner basis for this in general is more complicated and does not help to solve the equations. So, we have taken some additional assumptions such as $x_1 =0$, or $x_2=0$, or $x_3 =0$, o  $x_4=0$, and solved the system in each of the cases using Gr$\ddot{o}$bner basis. This gives solutions given in $(i)$ to $ iv)$. As the Gr$\ddot{o}$bner basis is very large, we have not included that here. Now, any other non-singular solution have the form $X^2\ =\ U \begin{pmatrix}
 ac&0\\
 0&bd
\end{pmatrix} U^{-1}$, for some $U \in GL_2(\mathbb{K})$, as $X^2$ is similar to $AB$ in that case .
\end{proof}
\begin{proposition}
\begin{itemize}
    \item[i)]When $a=0, b,c,d \neq 0$, then the only non-trivial solution to \eqref{main_equation} is $X= \begin{pmatrix}
0&0\\
0& \frac{d^2}{b}
\end{pmatrix}$, where $b^3 = d^3$.
\item[ii)] Similarly, when $b=0, a,c,d \neq 0$, then the only non-trivial solution is $X = \begin{pmatrix}
\frac{c^2}{a}&0\\
0& 0
\end{pmatrix}.$
\end{itemize}
\end{proposition}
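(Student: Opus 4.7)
The plan is to solve the polynomial system $f_1, \ldots, f_8$ from (\ref{eq11}) by hand after specializing $a = 0$ for part (i) (and $b = 0$ for part (ii)). With one diagonal entry of $A$ vanishing, most of the $f_i$ factor, so an explicit case-by-case elimination replaces the heavier Gr\"obner basis computation used in the previous proposition.

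For part (i), I would start from $f_8$, which collapses to $f_8 = x_4(d^2 - bx_4)$ and so forces $x_4 \in \{0,\, d^2/b\}$. In the branch $x_4 = 0$, the equations $f_6 = cdx_2$, $f_7 = cdx_3$ and then $f_5 = c^2 x_1$ successively force $x_2 = x_3 = x_1 = 0$, giving only the trivial solution. In the branch $x_4 = d^2/b$, the equations $f_6$ and $f_7$ reduce to $d(c-d)x_2 = 0$ and $d(c-d)x_3 = 0$. Assuming first that $c \neq d$, this gives $x_2 = x_3 = 0$, then $f_5 = c^2 x_1 = 0$ gives $x_1 = 0$, and finally $f_4$ collapses to $(d^2/b^2)(b^3 - d^3) = 0$, producing the asserted constraint $b^3 = d^3$ and the stated $X$.

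The main obstacle I anticipate is the degenerate subcase $c = d$ (where $B = cI$ is scalar), since then $d(c-d) = 0$ and the elimination of $x_2, x_3$ above fails. Here I would combine $f_1$ and $f_5$, each of which involves only $x_1$ and the product $x_2 x_3$, to eliminate $x_2 x_3$ and obtain the factored equation $x_1(bx_1 + d^2) = 0$. The branch $x_1 = 0$ then makes $f_2 = -(d^3/b)x_2$ and $f_3 = -(d^3/b)x_3$ force $x_2 = x_3 = 0$, returning to the same stated solution under $b^3 = d^3$. The branch $x_1 = -d^2/b$ makes $f_2$ and $f_3$ vanish automatically, but then $f_4$ collapses to the residual $bd^2$, whose non-vanishing contradicts $b, d \neq 0$; so no additional solution arises from this branch.

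For part (ii), I would avoid repeating the analysis by invoking the swap-matrix symmetry. Conjugation $X \mapsto PXP$ with $P = \begin{pmatrix} 0 & 1 \\ 1 & 0 \end{pmatrix}$ (so $P^2 = I$) is a bijection on the solution set of (\ref{main_equation}) and simultaneously swaps the diagonal entries of $A$ and of $B$. Thus the case $b = 0$ is transported to the case $a' = 0$ of part (i) with parameters $(b', c', d') = (a, d, c)$, whose unique non-trivial solution is $\begin{pmatrix} 0 & 0 \\ 0 & c^2/a \end{pmatrix}$ (under $a^3 = c^3$). Conjugating back by $P$ then yields $X = \begin{pmatrix} c^2/a & 0 \\ 0 & 0 \end{pmatrix}$, as asserted.
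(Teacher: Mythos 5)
Your proof is correct, and it takes a genuinely different route from the paper's. The paper specializes the system (\ref{eq11}) at $a=0$ (resp.\ $b=0$) and then appeals to a machine-computed Gr\"obner basis that it does not even display; you replace this with a self-contained hand elimination. Your case analysis is exhaustive and each step checks out: $f_8$ factors as $x_4(d^2-bx_4)$; the branch $x_4=0$ dies via $f_6,f_7,f_5$; in the branch $x_4=d^2/b$ with $c\neq d$ the equations $f_6,f_7$ force $x_2=x_3=0$, then $f_5$ forces $x_1=0$, and $f_4=(d^2/b^2)(b^3-d^3)$ yields the constraint $b^3=d^3$; and in the delicate scalar subcase $c=d$ (which the Gr\"obner computation handles invisibly) eliminating $x_2x_3$ between $f_1$ and $f_5$ gives $x_1(bx_1+d^2)=0$, with the spurious branch $x_1=-d^2/b$ indeed killed by the residual $f_4 = bd^2 + d^5/b^2 - d^5/b^2 = bd^2\neq 0$. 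Your conjugation argument for part (ii) is also sound: since $P^2=I$, the map $X\mapsto PXP$ carries solutions for $(A,B)$ to solutions for $(PAP,PBP)$, which transports the case $b=0$ to case (i) with parameters $(b',c',d')=(a,d,c)$. This is an improvement over the paper in two ways: it avoids a second elimination (the paper again defers to an uncomputed Gr\"obner basis with the phrase ``similar way''), and it makes explicit the hypothesis $a^3=c^3$ in part (ii), which the paper's statement leaves implicit behind the word ``Similarly'' even though it is genuinely needed (a direct check of $AXA=XBX$ for $x_1=c^2/a$ and all other entries zero forces $ac^2=c^5/a^2$). In short, the paper's route is mechanical and uniform with the rest of its Section 5 but rests on unstated computer algebra over $\mathbb{C}[a,b,c,d,x_1,\dots,x_4]$, where parameter degenerations like $c=d$ are hidden; yours is longer to write but human-verifiable over any field and exposes exactly where the cubic constraints arise.
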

\begin{proof}
when $a = 0$, we can rewrite the equations in \eqref{eq11}, and calculate the Gr$\ddot{o}$bner basis, which is simpler than the previous case. The corresponding generators of the ideals are the following. 
\begin{equation}
    \begin{aligned}
        f_1 &= cx_1^2+dx_2x_3, && f_2 &=& ~cx_1x_2+dx_2x_4\\
        f_3 &= cx_1x_3+dx_3x_4, && f_4 &=&~ b^2x_4-cx_2x_3-dx_4^2\\
        f_5 &= c^2x_1 -bx_2x_3, && f_6 &=&~ cdx_2-bx_2x_4\\
        f_7 &= cdx_3 -bx_3x_4, && f_8 &=& ~d^2x_4-bx_4^2.
    \end{aligned}
\end{equation}
Similar way, we can rewrite the equations in \eqref{eq11} when $b=0$, and $a,c,d\ \neq 0$, and calculate the Gr$\ddot{o}$bner basis, which leads to our solution.
\end{proof}
\begin{proposition}
When $a,c =0$ and $b,d \neq 0$, the solution has the following form.
$X = \begin{pmatrix}
\alpha &0\\
\beta & 0
\end{pmatrix}$ or $ \begin{pmatrix}
\alpha & \beta\\
0 & 0
\end{pmatrix}$, or  $\begin{pmatrix}
\alpha &0\\
0 & \frac{b^2}{d}
\end{pmatrix}$ where $\alpha, \beta \in \mathbb{K}$ and $b^3= d^3$.
\end{proposition}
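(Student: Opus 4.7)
The plan is to exploit the very sparse structure of $A$ and $B$ under the assumption $a=c=0$, and reduce the two matrix equations to a small polynomial system that can be solved by a direct case split. Write $X=\begin{pmatrix} x_1 & x_2 \\ x_3 & x_4 \end{pmatrix}$ and observe that, since $A = \operatorname{diag}(0,b)$ and $B = \operatorname{diag}(0,d)$, the products $AXA$ and $BXB$ collapse to matrices supported at the $(2,2)$ entry, namely $AXA = \operatorname{diag}(0,b^2 x_4)$ and $BXB = \operatorname{diag}(0, d^2 x_4)$. The quadratic sides $XBX$ and $XAX$ unfold to
\[
XBX = d\begin{pmatrix} x_2 x_3 & x_2 x_4 \\ x_3 x_4 & x_4^2 \end{pmatrix}, \qquad
XAX = b\begin{pmatrix} x_2 x_3 & x_2 x_4 \\ x_3 x_4 & x_4^2 \end{pmatrix}.
\]

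Equating entrywise and using $b,d\neq 0$, all eight polynomial relations of the form (\ref{eq11}) collapse to the compact system
\[
x_2 x_3 = 0, \qquad x_2 x_4 = 0, \qquad x_3 x_4 = 0,
\]
together with the two scalar conditions $b^2 x_4 = d x_4^2$ and $d^2 x_4 = b x_4^2$. Note that $x_1$ does not appear at all, so it remains a free parameter $\alpha \in \mathbb{K}$ in every case.

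The next step is a clean case split on $x_4$. If $x_4 = 0$, the two scalar conditions are automatic and only the constraint $x_2 x_3 = 0$ survives; this yields the two one-parameter families
\[
X = \begin{pmatrix} \alpha & 0 \\ \beta & 0 \end{pmatrix} \quad \text{or} \quad X = \begin{pmatrix} \alpha & \beta \\ 0 & 0 \end{pmatrix}.
\]
If $x_4 \neq 0$, then $x_4 = b^2/d$ from the first scalar equation and $x_4 = d^2/b$ from the second; equating these two values forces $b^3 = d^3$. Moreover $x_2 x_4 = x_3 x_4 = 0$ now forces $x_2 = x_3 = 0$, leaving
\[
X = \begin{pmatrix} \alpha & 0 \\ 0 & b^2/d \end{pmatrix}
\]
with the compatibility condition $b^3 = d^3$, matching the third listed form.

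There is no serious obstacle here; the only thing to be careful about is to \emph{both} equations of (\ref{main_equation}) being invoked (rather than just one), which is what forces the two scalar relations $b^2 x_4 = d x_4^2$ and $d^2 x_4 = b x_4^2$, and ultimately produces the constraint $b^3 = d^3$ in the non-trivial case. A brief sanity check that each listed $X$ actually satisfies both equations in (\ref{main_equation}) will complete the argument.
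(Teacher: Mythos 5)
Your proof is correct, and it starts from the same reduction as the paper: expanding both equations of (\ref{main_equation}) entrywise for $A=\operatorname{diag}(0,b)$, $B=\operatorname{diag}(0,d)$ yields exactly the eight polynomials $f_1,\dots,f_8$ displayed in the paper's proof (your matrices for $AXA$, $BXB$, $XAX$, $XBX$ are right, and $x_1$ indeed never appears, which is why it survives as the free parameter $\alpha$). Where you diverge is in how the system is solved. The paper feeds the eight polynomials into a Gr\"obner basis computation, treating $b,d$ as ring variables (which is why its basis carries separate factors of $b$ and $d$, e.g.\ $x_3x_4d$ and $x_3x_4b$), and then leaves the solving step implicit in ``on finding zeros of these''. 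You instead invoke $b,d\neq 0$ at the outset to cancel the scalars, collapsing everything to $x_2x_3=x_2x_4=x_3x_4=0$ together with $b^2x_4=dx_4^2$ and $d^2x_4=bx_4^2$, and then split on whether $x_4=0$. This is more elementary and fully self-contained: it avoids the computer-algebra step entirely, and it makes transparent where $b^3=d^3$ comes from (compatibility of the two $(2,2)$-entry equations when $x_4\neq 0$), whereas in the paper that constraint must be extracted by combining the basis elements $x_4b^2-x_4^2d$ and $x_4^2b-x_4d^2$. What the paper's method buys in exchange is uniformity: the same Gr\"obner pipeline handles the messier cases elsewhere in Section 5 where hand analysis would be tedious. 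One small remark: your closing ``sanity check'' is superfluous, since your derivation is an equivalence (entrywise equality of the matrix products), so the case split already produces exactly the solution set, with both inclusions established.
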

\begin{proof}
When $a,c =0$ and $b,d \neq 0$, the system of equations has a much simpler form, and easy to find solutions. The corresponding equations are the following.
\begin{equation}
    \begin{aligned}
        f_1 &= dx_2x_3, && f_2 &=&~ dx_2x_4\\
        f_3 &= dx_3x_4, && f_4 &=&~ b^2x_4-dx_4^2\\
        f_5 &= bx_2x_3, && f_6 &=&~ bx_2x_4\\
        f_7 &= bx_3x_4, && f_8 &=&~ d^2x_4-bx_4^2.
    \end{aligned}
\end{equation}
Then the corresponding Gr$\ddot{o}$bner basis for the ideal generated by the above polynomials is,
\[\{x_3x_4d,\ x_2x_4d,\ x_2x_3d,\ x_4b^2-x_4^2d,\ x_4^2b-x_4d^2,\ x_3x_4b,\ x_2x_4b,\ x_2x_3b,\ x_4^3d-x_4bd^2\}\] 
On finding zeros of these, we get the solution forms as above.
\end{proof}
\subsection{When A is non-diagonalizable and B is diagonalizable}
Let $A\ = \ \begin{pmatrix}
 a&1\\
0&a 
\end{pmatrix}$ and $B\ =\ \begin{pmatrix}
 b&0\\
 0&c
\end{pmatrix}$, and $X$ as previously $\begin{pmatrix}
 x_1 & x_2\\
 x_3 & x_4
\end{pmatrix}$.
\begin{proposition}
\begin{itemize}
    \item[a)] When $a=0$, for a solution to \eqref{main_equation}, we have the following forms.
\begin{itemize}
    \item[i] When $b,c \neq 0$, \eqref{main_equation} has only trivial solution.
    \item[ii] If $b=0, c \neq 0$, then the solutions to \eqref{main_equation}  are $X\ =\ \begin{pmatrix}
     \alpha&\beta\\
     0&0
    \end{pmatrix}$ , where $ \alpha, \beta, \in \mathbb{K}$.
    \item[iii] If $b \neq 0, c =0$, then the solutions to \eqref{main_equation}  are $X\ =\ \begin{pmatrix}
     0&\alpha\\
     0&\beta
    \end{pmatrix}$ , where $ \alpha, \beta, \in \mathbb{K}$.
\end{itemize}
\item[b)] When $a \neq 0$, and $b, c\ \in\ \mathbb{K}$, \eqref{main_equation} has only trivial solution.
\end{itemize}
\end{proposition}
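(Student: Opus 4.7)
The plan is to expand the two matrix equations $AXA = XBX$ and $BXB = XAX$ entry-by-entry into a system of eight polynomial equations in $x_1, x_2, x_3, x_4$, and then to branch on the vanishing of $x_3$. With $A = \begin{pmatrix} a & 1 \\ 0 & a \end{pmatrix}$ and $B = \begin{pmatrix} b & 0 \\ 0 & c \end{pmatrix}$, the two $(2,1)$-entries give the cleanest relations
\[
x_3\bigl(a^2 - bx_1 - cx_4\bigr) = 0, \qquad x_3\bigl(bc - ax_1 - ax_4 - x_3\bigr) = 0,
\]
so either $x_3 = 0$ (and the system collapses to essentially monomial equations in the remaining variables) or the two linear relations $bx_1 + cx_4 = a^2$ and $x_3 = bc - a(x_1 + x_4)$ can be substituted back into the other six equations.

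For part (a) with $a = 0$: The matrix $AXA$ has only one nonzero entry, namely $x_3$ in position $(1,2)$, and $XAX$ takes the rank-at-most-one form with entries $x_i x_j$ for $i \in \{1,3\}$, $j \in \{3,4\}$. In subcase (i) with $b, c \neq 0$, the $(2,1)$- and $(2,2)$-entries of $BXB = XAX$ give $bc\,x_3 = x_3^2$ and $c^2 x_4 = x_3 x_4$; a short case-split on the values of $x_3$ and $x_4$, combined with the $(1,1)$-entry $b^2 x_1 = x_1 x_3$ and the off-diagonal equation $bx_1 x_2 + cx_2 x_4 = x_3$, kills every nontrivial branch. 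In subcases (ii) and (iii) only one of $b, c$ vanishes, and the $(2,2)$-entry of $AXA = XBX$ forces the corresponding pair among $\{x_3, x_4\}$ or $\{x_1, x_3\}$ to vanish, leaving the two remaining parameters unconstrained.

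For part (b) with $a \neq 0$: In the sub-case $x_3 = 0$, the diagonal-entry equations reduce to $x_1(a^2 - bx_1) = 0$, $x_1(b^2 - ax_1) = 0$ and the mirror pair in $x_4$, so each nonzero branch forces the cubic identity $a^3 = b^3$ or $a^3 = c^3$; plugging back into the off-diagonal equation $ax_1 + a^2 x_2 + ax_4 = bx_1 x_2 + cx_2 x_4$ produces a relation of the form $a^3/b = 0$ or $a^3/c = 0$, contradicting $a \neq 0$ in characteristic zero. In the sub-case $x_3 \neq 0$, after substituting the two linear relations above into the remaining six equations, I would compute a Gr$\ddot{o}$bner basis in the lex order placing $x_2, x_1, x_4$ highest; by the elimination theorem recalled in Section 2 its intersection with $\mathbb{K}[a,b,c]$ is again a Gr$\ddot{o}$bner basis, and one checks that this intersection contains a pure positive power of $a$, which forces $a = 0$.

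The main obstacle is exactly this last sub-case, $x_3 \neq 0$ with $a \neq 0$: once the two linear constraints have been spent, the remaining six equations are all genuinely quadratic with no obvious further simplification, so a clean by-hand elimination is tedious. An alternative route in the regime $bc \neq 0$ would be to invoke Theorem \ref{theorem-x2}, which gives $X^2 \sim BA$ and hence fixes $\operatorname{tr}(X^2) = a(b+c)$ and $\det(X^2) = a^2 bc$; matching this against the entry-wise constraints directly also produces a contradiction, but following the established methodology of the paper, I would let Sage-Math produce the Gr$\ddot{o}$bner basis and verify that $a$ appears as a generator of the elimination ideal, thereby concluding $X = 0$.
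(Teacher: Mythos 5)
Your overall strategy (expand (\ref{main_equation}) into eight entry-wise polynomials, branch on $x_3$, and fall back on Gr\"obner elimination for the hard branch) is the same in spirit as the paper's, which runs the entire argument through two iterations of Gr\"obner computations; the problem is that several of the by-hand steps you substitute for those computations do not hold as stated. In part (a)(i), the four equations you cite do \emph{not} kill every nontrivial branch: the matrix with $x_1=x_3=x_4=0$ and $x_2$ arbitrary satisfies $bc\,x_3=x_3^2$, $c^2x_4=x_3x_4$, $b^2x_1=x_1x_3$ and $bx_1x_2+cx_2x_4=x_3$, yet it is not a solution of (\ref{main_equation}). The equation that forces $x_2=0$ is the $(1,2)$-entry of $BXB=XAX$, namely $bcx_2=x_1x_4$, which is missing from your list. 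Likewise in (a)(iii), the $(2,2)$-entry of $AXA=XBX$ is $bx_2x_3=0$ (since $c=0$), which kills neither $x_1$ nor $x_3$; the work there is done by the $(1,1)$-entry $bx_1^2=0$ together with $x_3^2=0$ coming from the $(2,1)$-entry of $BXB=XAX$.

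More seriously, in part (b) with $x_3=0$ your case split misses the branch where $x_1\neq 0$ \emph{and} $x_4\neq 0$ simultaneously. There $x_1=a^2/b$, $x_4=a^2/c$ with $a^3=b^3=c^3$, and the off-diagonal equation you cite reads $a^3/b+a^3/c+a^2x_2=2a^2x_2$, which merely determines $x_2=a(b+c)/(bc)$ and produces no contradiction; to close this branch you must also invoke the second off-diagonal equation $bcx_2=ax_1x_2+x_1x_4+ax_2x_4$, which then reduces (using $a^3=b^3=c^3$) to $3a^3=0$. Your alternative route for $x_3\neq 0$ via Theorem \ref{theorem-x2} also cannot work, because that theorem presupposes $X$ invertible, so it says nothing about singular nontrivial solutions. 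The Gr\"obner elimination you propose for $x_3\neq 0$ would indeed succeed (it is essentially what the paper does: a first basis forces $x_3=0$, a second forces $x_1=x_2=x_4=0$), but as written it is an unperformed computation; note that a short hand argument is available here, since with $x_3\neq 0$ the two $(2,1)$-entries give $bx_1+cx_4=a^2$ and $x_3=bc-a(x_1+x_4)$, the $(1,2)$-entry of $AXA=XBX$ then collapses to $a(x_1+x_4)+x_3=0$, hence $bc=0$; taking $c=0$ (so $b\neq 0$ and $x_1=a^2/b$), the $(1,1)$-entry of $AXA=XBX$ gives $ax_3=0$, contradicting $a\neq 0$.
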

\begin{proof}
When $A\ =\ \begin{pmatrix}
 0&1\\
0&0 
\end{pmatrix}$, $B\ =\ \begin{pmatrix}
 b&0\\
 0&c
\end{pmatrix}$, and $X\ =\ \begin{pmatrix}
 x_1&x_1\\
 x_3&x_4
\end{pmatrix}$, the initial computation of Gr$\ddot{o}$bner basis corresponds to the ideal generated by the component polynomial equations from $AXA\ =\ XBX$ and $BXB\ =\ XAX$, from which one infers $x_3 = 0$. After modifying $X$ as $\begin{pmatrix}
 x_1&x_2\\
 0&x_4
\end{pmatrix}$, we have the Gr$\ddot{o}$bner basis as,
\[ \{ x_4c^2,\ x_2bc-x_1x_4,\ x_4^2c,\ x_1b^2,\ x_1x_2b+x_2x_4c,\ x_1^2b,\ x_1x_4^2,\ x_1^2x_4, x_1x_2x_4c\}.\]
Then, solving from Gr$\ddot{o}$bner basis gives us the solutions in $(a)$.\\
Now, when $A\ =\ \begin{pmatrix}
 a&1\\
 0&a
\end{pmatrix}$, and $B\ =\ \begin{pmatrix}
 b&0\\
 0&c
\end{pmatrix}$, such that $a \neq 0$, the initial Gr$\ddot{o}$bner basis of the corresponding ideal shows that $x_3 = 0$, irrespective the value of  $b,\ c$. Then on next iteration, after giving $x_3\ = 0$, we get $x_1,\ x_2,\ x_4\ =\ 0$. 
\end{proof}
\subsection{When A and B are non-diagonalizable}
Let $A\ = \ \begin{pmatrix}
 a&1\\
0&a 
\end{pmatrix}$, $B\ =\ \begin{pmatrix}
 b&1\\
 0&b
\end{pmatrix}$, and $X$ as previously $\begin{pmatrix}
  _1 & x_2\\
 x_3 & x_4
\end{pmatrix}$.
\begin{proposition}
Let $A$ and $B$ be as above. Then for any non-trivial solution X of \eqref{main_equation}, we have,
\begin{itemize}
    \item[i-] When $a,b \neq 0$, $X$ has one of the following form.\\
    $\begin{pmatrix}
a&1\\
0&a
\end{pmatrix}, 
\begin{pmatrix}
a+a\sqrt{\alpha}& \alpha\\
-a^2 & a-a \sqrt{\alpha}
\end{pmatrix} or 
\begin{pmatrix}
a-a\sqrt{\alpha}& \alpha\\
-a^2 & a+a \sqrt{\alpha}
\end{pmatrix}$ where $\alpha \in K$, and $a =\ b$, in that case. 
\item[ii-] When $a,b \neq 0$, and $a \neq b$, then any non-trivial solution $X$ has the form 
$\begin{pmatrix}
\alpha & \frac{b^2+ \alpha^2 - \alpha(a+b)}{ab}\\
-ab & (a+b)-\alpha
\end{pmatrix}$.
\item[iii-] When one of $a,b$ is zero, then \eqref{main_equation} has only trivial solution.
\end{itemize}
\end{proposition}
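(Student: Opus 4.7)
The plan is to extend the Gr\"obner basis strategy already in force for the diagonalizable subsection. Expanding both $XAX=BXB$ and $XBX=AXA$ componentwise yields eight quadratic polynomials $f_1,\ldots,f_8\in\mathbb{K}[x_1,x_2,x_3,x_4]$ with $a,b$ as parameters. After specializing $a,b$ to each of the three parameter regimes listed in the proposition, I would compute a lexicographic Gr\"obner basis of $\langle f_1,\ldots,f_8\rangle$ (using Sage-Math, as in the previous propositions of this section) and then read off the vanishing locus by back-substitution. The decisive point is that each of the three specializations collapses the basis dramatically, in the same way that the basis in the diagonalizable subsection became tractable only after the coefficient scalars were specialized.

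For case~(iii), I set $a=0$ and $b\neq 0$; the other subcase $b=0$, $a\neq 0$ is entirely symmetric. The $(2,1)$-entry of $XAX=BXB$ reads $x_3^2=b^2 x_3$, forcing $x_3\in\{0,b^2\}$; the branch $x_3=b^2$ is immediately killed by the $(2,2)$-entry of the same equation, which collapses to $b^3=0$, and then from $x_3=0$ the remaining entries of $XAX=BXB$ cascade to $x_1=x_4=x_2=0$. Case~(i), $a=b\neq 0$, is also quick: here $A=B$ and the two equations collapse to the classical YBE $XAX=AXA$. Theorem~1.3 of the excerpt already classifies its solutions --- singular ones are $0$, invertible ones are similar to $A$ --- so it remains only to unfold the similarity class of $A$ in coordinates. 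Writing $X=aI+N$ with $N$ a (possibly zero) rank-$\leq 1$ nilpotent and parameterizing the allowed $N$ by a scalar $\alpha$ produces the two non-trivial families in the statement, while $N=E$ recovers $X=A$.

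Case~(ii), with $a,b\neq 0$ and $a\neq b$, is the main obstacle. The first step is to rule out singular solutions: if $\det X=0$ then $\ker X$ is a line that must be both $A$- and $B$-invariant by the earlier kernel-preservation lemma for invertible coefficients, hence $\ker X=\mathbb{K}\cdot e_1$, i.e., $x_1=x_3=0$; plugging back into the componentwise equations then forces $x_2=x_4=0$ after a short calculation (mirroring the end of case~(iii)). So $X$ is invertible, and Theorem~\ref{theorem-x2} gives $X^2\sim BA$. From $\det(X^2)=(ab)^2$ and $\mathrm{tr}(X^2)=2ab$, together with the $(2,1)$-entry of $XBX=AXA$, I would extract $x_3=-ab$, $\mathrm{tr}(X)=a+b$, and $\det(X)=b^2$; taking $\alpha:=x_1$ as the free parameter, the entries $x_4=(a+b)-\alpha$ and $x_2$ are then determined by the trace and determinant relations, yielding the formula in the statement. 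The hardest part will be controlling the symbolic Gr\"obner reduction in $a,b$: the natural pivots divide by $a-b$ (and by further factors of $a^6-b^6$ coming from the compatibility $\det(A^3)=\det(B^3)$), so either one must localize at these elements in $\mathbb{K}[a,b]$ or one must track, component by component of $V(I)$, which parameter values are genuine solutions versus artifacts of clearing denominators.
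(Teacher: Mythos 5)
Your case (iii) argument is correct (and a nice hand-made substitute for the paper's Gr\"obner computation), and your reduction of the singular case in (ii) via the kernel-invariance lemma is sound. But the two main cases have genuine gaps. In case (i) you treat the Jordan-block theorem quoted from \cite{first} as a \emph{classification}, when it only gives a necessary condition: an invertible solution of $XAX=AXA$ must be similar to $A$. The converse fails: $X=A^{T}$ is similar to $A$ but satisfies $XAX=AXA$ only when $a^2+1=0$, and more tellingly the similarity class of $A$ is a two-parameter family while the solution set in (i) is a one-parameter family together with $X=A$. So ``unfolding the similarity class in coordinates'' overcounts; the entire content of case (i) is the extra constraint obtained by substituting $X=aI+N$ ($N$ nilpotent of rank one) into the equation, which forces $x_3\in\{0,-a^2\}$ and then yields the two displayed families. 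That is exactly the step you compress into the phrase ``the allowed $N$.'' The paper sidesteps this by citing the complete solution list for the single YBE from \cite{first}, not the similarity statement.

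Case (ii) is where the proposal would actually fail. You assume nontrivial solutions exist throughout the regime $a,b\neq 0$, $a\neq b$, and set out to pin down their entries; they do not exist there. The first Gr\"obner computation in the paper's proof shows $x_3(a^2-b^2)$ lies in the ideal, so for $a^2\neq b^2$ every solution has $x_3=0$ and then collapses to $X=0$; the displayed family solves (\ref{main_equation}) only when $a=-b$ (e.g.\ $a=1$, $b=2$, $\alpha=0$ gives $XAX\neq BXB$), and the paper's proof indeed treats precisely the subcase $a\neq b$, $a^2=b^2$. Your proposed extraction is also internally inconsistent away from $a=-b$: Theorem \ref{theorem-x2} gives $X^2\sim BA$, hence $\det X=\pm ab$ and, by $(\operatorname{tr}X)^2=\operatorname{tr}(X^2)+2\det X$, either $\operatorname{tr}X=0$ or $(\operatorname{tr}X)^2=4ab$; this is compatible with the values you assert, $\det X=b^2$ and $\operatorname{tr}X=a+b$, only if $b^2=-ab$, i.e.\ $a=-b$. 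Likewise, the $(2,1)$-entries of the two equations read $x_3\bigl(a(x_1+x_4)+x_3-b^2\bigr)=0$ and $x_3\bigl(b(x_1+x_4)+x_3-a^2\bigr)=0$, which for $x_3\neq 0$ and $a\neq b$ force $\operatorname{tr}X=-(a+b)$ and $x_3=a^2+ab+b^2$; these agree with the claimed $\operatorname{tr}X=a+b$ and $x_3=-ab$ only when $a+b=0$. Carried out honestly, your route would therefore prove that the regime $a\neq\pm b$ admits only the trivial solution --- the key structural fact (surfaced in the paper by the element $x_3(a^2-b^2)$ of the ideal, followed by iterated Gr\"obner bases at $a^2=b^2$) that your proposal never identifies.
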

\begin{proof}
For the given $A,B$, and $X$ as $\begin{pmatrix}
 x_1 & x_2\\
 x_3 & x_4
\end{pmatrix}$, the initial computation of Gr$\ddot{o}$bner basis for the ideal generated by the polynomials appeared in \eqref{main_equation} shows that $x_3(a^2-b^2) = 0$. Then either $x_3 =0$ or $a^2 = b^2$. If $a^2 = b^2$, and $a =b$, then the system of equations becomes single YBE, and authors have found its solution in \cite{first}, which are given in $(i)$ above. Now, if $a \neq b$, but $a^2 = b^2$, we have $b^3+x_3a=0$ in Gr$\ddot{o}$bner basis. This gives $x_3 = -ab$. After substituting the said value for $x_3$, on the next iteration, we have $x_1ab^2+x_4ab^2-ab^3-b^4 = 0$ in the Gr$\ddot{o}$bner basis. This gives $x_1 +x_4 = a+b$. Now, adding $x_1 +x_4 -(a+b)$ to the basis, the Gr$\ddot{o}$bner basis becomes:
\[\{x_1+x_4-a-b,\ a^2-b^2,\ x_2ab-x_4^2+x_4a+x_4b-b^2,\ x_2b^3-x_4^2a+x_4ab+x_4b^2-ab^2\}.\]
Solving this gives the solution in $(ii)$.\\
When $a \neq 0$ and $b =0$, the Gr$\ddot{o}$bner basis correspond to the system of equations i  \eqref{main_equation} which gives, $x_3^2=0, x_4^3a=0$. Then we have $x_3,x_4=0$. Adding this to the basis, Gr$\ddot{o}$bner basis reduce to $\{x_2a^2+x_1a,\ x_1a^2,\ x_1x_2a,\ x_1^2a\}$. This gives us \eqref{main_equation} has only a trivial solution in this case.
\end{proof}
\textbf{Acknowledgment:}
 The first author would like to thank the University Grant Commission- Ministry of Human Resource Development, New Delhi, for the financial support provided through the CSIR-UGC fellowship. The second author is partially supported by CDRF, BITS Pilani, India, grant no. CDRF, C1/23/185. The third author is supported by the Ministry of Science, Technological Development and Innovations, Republic of Serbia, grant No. 451-03-66/2024-03/200029, and by the bilateral project between the Republic of Serbia and France (Generalized inverses on algebraic structures and applications), grant no. 337-00-93/2022-05/13. The authors are thankful to the anonymous referee for many helpful suggestions. 
\bibliographystyle{abbrv}
\bibliography{YB}

\end{document}